\newtheorem{theorem}{\indent Theorem}[section]
\newtheorem{lemma}[theorem]{\indent Lemma}
\newtheorem{remark}{\indent Remark}[section]
\begin{document}
\renewcommand{\baselinestretch}{1.3}


\begin{center}
    {\large \bf   Positive ground state solutions for fractional Laplacian system with one critical exponent and one subcritical exponent}
\vspace{0.5cm}\\
{\sc Maoding Zhen$^{1,2}$, Jinchun He$^{1,2} $,Haoyuan Xu$^{1,2,\ast} $,Meihua Yang$^{1,2} $}\\
{\small 1) School of Mathematics and Statistics, Huazhong University of Science and Technology,\\ Wuhan 430074, China}\\
{\small 2) Hubei Key Laboratory of Engineering Modeling and Scientific Computing, Huazhong University of Science and Technology,}
{\small Wuhan, 430074, China}
\end{center}


\renewcommand{\theequation}{\arabic{section}.\arabic{equation}}
\numberwithin{equation}{section}


\begin{abstract}
In this paper, we consider the following fractional Laplacian system with one critical exponent and one subcritical exponent
\begin{equation*}
\begin{cases}

(-\Delta)^{s}u+\mu u=|u|^{p-1}u+\lambda v & x\in \ \mathbb{R}^{N},\\

(-\Delta)^{s}v+\nu v = |v|^{2^{\ast}-2}v+\lambda  u&  x\in \ \mathbb{R}^{N},\\
\end{cases}
\end{equation*}
where $(-\Delta)^{s}$ is the fractional Laplacian, $0<s<1,\ N>2s, \ \lambda <\sqrt{\mu\nu },\ 1<p<2^{\ast}-1~ and~\  2^{\ast}=\frac{2N}{N-2s}$~ is the Sobolev critical exponent.
By using the Nehari\ manifold, we show that there exists a $\mu_{0}\in(0,1)$, such that when $0<\mu\leq\mu_{0}$, the system has a positive ground state solution. When $\mu>\mu_{0}$, there exists a $\lambda_{\mu,\nu}\in[\sqrt{(\mu-\mu_{0})\nu},\sqrt{\mu\nu})$ such that if $\lambda>\lambda_{\mu,\nu}$, the system has a positive ground state solution, if $\lambda<\lambda_{\mu,\nu}$, the system has no  ground state solution.

\textbf{Keywords:} fractional Laplacian; critical exponent; subcritical exponent; ground state solution
\end{abstract}
\vspace{-1 cm}

\footnote[0]{ \hspace*{-7.4mm}
$^{*}$ Corresponding author.\\
AMS Subject Classification: 35J50, 35B33, 35R11\\
The authors were supported by the NSFC grant 11571125.\\
}

\section{Introduction}
\hspace{0.6cm} In the past decades, the Laplacian equation or system has been widely investigated and there are many results  about ground state solutions, multiple positive solutions, sign-changing solutions, etc(see \cite{PWW,PPW,CZ1,CZ2,CZ3,CZ6,CM} and references therein).

Compared to the Laplacian problem, the fractional Laplacian problem is non-local and more challenging. Recently, a great attention has been focused on the study of fractional and non-local operators of elliptic type, both for
the pure mathematical research and in view of concrete real-world applications(see \cite{BCPS,CDS2,MF,CRS,SV2,CS2,SZY,GLZ} and references therein). This type of operator arises in a quite natural way in many different contexts, such as, the thin obstacle problem, finance, phase transitions, anomalous diffusion, flame propagation and many others(see\cite{GGP,DPV,MMC,LS} and references therein).

For the case of fractional Laplacian equation, the existence and nonexistence of solutions has been studied by many researchers. For example
\begin{align}\label{ddd}
(-\Delta)^{s}u+u=f(u) \ \ \  \text{in} \ \ \mathbb{R}^{N}
\end{align}
has been studied by many authors under various hypotheses on the nonlinearity $f$. Such as, Wang and Zhou \cite{ZWH} obtained the existence of a radial sign-changing solution for equation \eqref{ddd} by using variational
method and Brouwer degree theory. When the nonlinearity $f$ satisfies the general hypotheses introduced by Berestycki and Lions \cite{HBPL}, Chang and Wang \cite{XCZ} proved the existence of a radially symmetric ground state solution with the help of the Pohoz\v{a}ev identity for \eqref{ddd}. However, in all these works, they only consider the existence and nonexistence solutions, but there are few results about the uniqueness of solution for fractional Laplacian equation. In the remarkable papers \cite{RLFL}\cite{FLS}, for the subcritical case, when $f(u)=|u|^{p-2}u,\ p\in(2,2^{\ast})$, R.L. Frank and E. Lenzmann \cite{RLFL} showed the uniqueness of non-linear ground states solutions to the equation \eqref{ddd} for one dimension case and R.L. Frank, E. Lenzmann and L. Silvestre \cite{FLS} showed the general unique ground state solution  to the equation \eqref{ddd} for dimension great than one.

It is also nature to study the coupled system. For the following fractional Laplacian system,
\begin{equation*}
\begin{cases}

(-\Delta)^{s}u=F(u,v) & x\in \ \ \mathbb{R}^{N},\\

(-\Delta)^{s}v= G(u,v)&  x\in \ \ \mathbb{R}^{N},\\
\end{cases}
\end{equation*}
has been investigated  by many authors under various hypotheses on the nonlinearity $F(u,v)$ and $G(u,v)$. For example,
when $F(u,v)=f(u)+\lambda v-u, G(u,v)=g(u)+\lambda u-v $, D.F. L\"{u}, S.J. Peng \cite{DFP} showed that under suitable condition of $f,g$, it has a vector ground state solution for $\lambda\in(0,1)$. When $F(u,v)=\mu_{1}|u|^{2^{\ast}-2}u+\frac{\alpha\gamma}{2^{\ast}}|u|^{\alpha-2}u|v|^{\beta}, G(u,v)=\mu_{2} |v|^{2^{\ast}-2}v+\frac{\beta\gamma}{2^{\ast}}|u|^{\alpha}|v|^{\beta-2}v$,
M. D. Zhen, J. C. He and H. Y. Xu \cite{ZHX} showed that the existence and nonexistence of ground state solutions under suitable condition of $\alpha,\beta,\gamma,s, N$ and Z. Guo, S. Luo and W. Zou \cite{GLZ} showed that under suitable condition of $\alpha,\beta,s, N$ the system has a positive ground state solution for all $\gamma>0$. When $F(u,v)=(|u|^{2p}+b|u|^{p-1}|v|^{p+1})u-u, G(u,v)=(|v|^{2p}+b|v|^{p-1}|u|^{p+1})v-\omega ^{2\alpha}v$, Q. Guo and X. He \cite{QGX} proved the existence of a least energy solution via Nehari manifold method and showed
that if $b$ is large enough, it has a positive least energy solution.
Note that in all these works, they only consider subcritical case or critical case.  As far as we know, there are few results for the fractional Laplacian system with one subcritical equation and one critical equation.  In this paper, we consider the following fractional Laplacian system with one critical exponent and one subcritical exponent on  ${\mathbb R}^N$. In the case of Laplacian system, the problem has been investigated by Z. Chen, W. Zou in \cite{CZ6}.

The system we consider is the following
 \begin{equation}\label{int1}
\begin{cases}

(-\Delta)^{s}u+\mu u=|u|^{p-1}u+\lambda  v & x\in \ \ \mathbb{R}^{N},\\

(-\Delta)^{s}v+\nu v= |v|^{2^{\ast}-2}v+\lambda  u&  x\in \ \ \mathbb{R}^{N},\\
\end{cases}
\end{equation}
where $(-\Delta)^{s}$ is the fractional Laplacian, $0<s<1,\ N>2s, \ \lambda <\sqrt{\mu\nu },\ 1<p<2^{\ast}-1,\  2^{\ast}=\frac{2N}{N-2s}$ is the Sobolev critical exponent.
The fractional Laplacian\ $(-\Delta)^{s}$\ is defined by
$$
-(-\Delta)^{s}u(x)=\frac{C(N,s)}{2}\int_{\mathbb{R}^{N}}\frac{u(x+y)+u(x-y)-2u(x)}{|y|^{N+2s}}dy,\ \ x\in\mathbb{R}^{N}
$$ with

$$
C(N,s)=\left(\int_{\mathbb{R}^{N}}\frac{1-\cos(\varsigma_{1})}{|\varsigma|^{N+2s}}d\varsigma\right)^{-1}=2^{2s}\pi^{-\frac{N}{2}}\frac{\Gamma(\frac{N+2s}{2})}{\Gamma(2-s)}s(1-s).
$$

Let $D^{s}(\mathbb{R}^{N})$ be the Hilbert space defined as the completion of $C^{\infty}_0(\mathbb{R}^N)$ with the scalar product
$$
\langle u,v\rangle_{D^{s}(\mathbb{R}^{N})}=\frac{C(N,s)}{2}\int_{\mathbb{R}^{N}}\int_{\mathbb{R}^{N}}\frac{(u(x)-u(y))(v(x)-v(y))}{|y-x|^{N+2s}}dxdy,
$$and norm
$$
\|u\|^{2}_{D^{s}(\mathbb{R}^{N})}=\int_{\mathbb{R}^{N}}|(-\Delta)^{\frac{s}{2}}u|^{2}dx=\frac{C(N,s)}{2}\int_{\mathbb{R}^{N}}\int_{\mathbb{R}^{N}}\frac{|u(x)-u(y)|^{2}}{|y-x|^{N+2s}}dxdy.
$$
Let ${H^{s}(\mathbb{R}^{N})}$ be the Hilbert space of function in $\mathbb{R}^N$ endowed with the standard scalar product and norm
$$\langle u,v\rangle=\int_{\mathbb{R}^{N}}\left((-\Delta)^{\frac{s}{2}} u\cdot(-\Delta)^{\frac{s}{2}}v+uv\right)dx,\ \ \|u\|_{H^{s}(\mathbb{R}^{N})}^{2}=\langle u,u\rangle.$$
Let $C_{p+1}$ be the sharp constant of the Sobolev embedding ${H^{s}(\mathbb{R}^{N})}\hookrightarrow L^{p+1}(\mathbb{R}^{N})$
\begin{align}\label{intabc}
C_{p+1}=\inf \limits_{u\in {H^{s}(\mathbb{R}^{N})})\setminus \{0\}}\frac{\|u\|^{2}_{H^{s}(\mathbb{R}^{N})}}{(\int_{\mathbb{R}^{N}}|u|^{p+1}dx)^{\frac{2}{p+1}}},
\end{align}
and let $S_s$ be the sharp imbedding constant of ${D^{s}(\mathbb{R}^{N})}\hookrightarrow L^{2^{\ast}}(\mathbb{R}^{N})$
\begin{align}\label{int2}
S_{s}=\inf \limits_{u\in {D^{s}(\mathbb{R}^{N})})\setminus \{0\}}\frac{\|u\|^{2}_{D^{s}(\mathbb{R}^{N})}}{(\int_{\mathbb{R}^{N}}|u|^{2^{\ast}}dx)^{\frac{2}{2^{\ast}}}}.
\end{align}

From \cite{CT} we have  $S_{s}$\ is attained in\ $\mathbb{R}^{N}$ by \ $\widetilde{u}_{\epsilon, y}=\kappa(\varepsilon^{2}+|x-y|^{2})^{-\frac{N-2s}{2}}$, where $\kappa\neq 0 \in \mathbb{R},\ \varepsilon>0 $  and $y\in
 \mathbb{R}^{N}$.

Denote $\mathcal{H}={H^{s}(\mathbb{R}^{N})}\times {H^{s}(\mathbb{R}^{N})}$ and $\mathcal{D}={D^{s}(\mathbb{R}^{N})}\times {D^{s}(\mathbb{R}^{N})}$, with the norm given by
$$
\|(u,v)\|_{\mathcal{H}}^{2}=\|u\|_{H^{s}(\mathbb{R}^{N})}^{2}+\|v\|_{H^{s}(\mathbb{R}^{N})}^{2}=\|(u,v)\|_{\mathcal{D}}^{2}+\|u\|^{2}_{L^2(\mathbb{R}^N)}+\|v\|^{2}_{L^2(\mathbb{R}^N)},
$$where $||(u,v)||^{2}_{\mathcal{D}}=\|u\|^{2}_{D^{s}(\mathbb{R}^{N})}+\|v\|^{2}_{D^{s}(\mathbb{R}^{N})}$.

The energy functional associated with \eqref{int1} is given by
\begin{align}\label{pp}
E_{\mu,\nu,\lambda}(u,v)=&\frac{1}{2}||(u,v)||^{2}_{\mathcal{D}}+\frac{1}{2}\int_{\mathbb{R}^{N}}(\mu u^{2}+\nu v^{2})dx\\\nonumber
&-\frac{1}{p+1}\int_{\mathbb{R}^{N}}|u|^{p+1}dx-\frac{1}{2^{\ast}}\int_{\mathbb{R}^{N}}|v|^{2^{\ast}}dx-\lambda \int_{\mathbb{R}^{N}}uvdx.
\end{align}

Define the Nehari manifold
\begin{align*}
\mathbb{M}:=\mathbb{M}_{\lambda}:=\mathbb{M}_{\mu,\nu,\lambda}&=\{(u,v)\in \mathcal{H}\backslash \{(0,0)\},||(u,v)||^{2}_{\mathcal{D}}+\int_{\mathbb{R}^{N}}(\mu u^{2}+\nu v^{2})dx\\
&=\int_{\mathbb{R}^{N}}(|u|^{p+1}+|v|^{2^{\ast}})dx+2\lambda \int_{\mathbb{R}^{N}}uvdx\},
\end{align*}

We say that $(u,v)$ is a nontrivial solution of \eqref{int1} if $u\neq0,v\neq0$ and $(u,v)$ solves \eqref{int1}. Any nontrivial solution of \eqref{int1}\ is in $\mathbb{M}$. Due to the fact that if we take  $\varphi,\psi\in\mathcal{C}^{\infty}_{0}(\mathbb{R}^{N})$ with $\varphi,\psi\not\equiv0$ and $supp(\varphi)\bigcap supp(\psi)=\emptyset$, then there exist $t_{1},t_{2}>0$ such that $(t_{1}\varphi,t_{2}\psi)\in\mathbb{M}$, so   $\mathbb{M}\neq\emptyset$.

Let
\begin{align}\label{int43}
\mu_{0}=\left[\frac{2s(p+1)}{N(p-1)}S^{\frac{N}{2s}}_{s}C^{-\frac{p+1}{p-1}}_{p+1}\right]^{(\frac{p+1}{p-1}-\frac{N}{2s})^{-1}}.
\end{align}
Our main result is:
\begin{theorem}\label{Th1}
Assume $N>2s,\ 1<p<2^{\ast}-1$ and $\mu,\nu>0,\ 0<\lambda<\sqrt{\mu\nu}$. Let $\mu_{0}$ be in \eqref{int43}.\\
$(1)$ If $0<\mu\leq\mu_{0},$ then the system \eqref{int1} has a positive ground state solution.\\
$(2)$ If $\mu>\mu_{0},$ then there exists $\lambda_{\mu,\nu}\in[\sqrt{(\mu-\mu_{0})\nu},\sqrt{\mu\nu})$ such that,

$(i)$ if $\lambda<\lambda_{\mu,\nu}$, the system \eqref{int1} has no ground state solution.

$(ii)$ if $\lambda>\lambda_{\mu,\nu}$, the system \eqref{int1} has a positive ground state solution.
 \end{theorem}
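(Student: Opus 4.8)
The plan is to reduce the theorem to the study of the constrained infimum $c_{\mu,\nu,\lambda}:=\inf_{\mathbb{M}}E_{\mu,\nu,\lambda}$ and to compare it with the critical threshold $B:=\frac{s}{N}S_{s}^{N/(2s)}$, the energy of the Aubin--Talenti bubble (the extremal of $S_s$). First I would record the algebraic facts that on $\mathbb{M}$ one has $E_{\mu,\nu,\lambda}(u,v)=\frac{p-1}{2(p+1)}\int_{\mathbb{R}^{N}}|u|^{p+1}+\frac{s}{N}\int_{\mathbb{R}^{N}}|v|^{2^\ast}$, that $\lambda<\sqrt{\mu\nu}$ makes the quadratic form $\int(\mu u^2+\nu v^2-2\lambda uv)$ positive definite and equivalent to the $L^2$ norm, and hence $c_{\mu,\nu,\lambda}>0$ with every minimizing sequence bounded in $\mathcal H$. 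Testing with the pairs $(0,v)$ (which already lie in $\mathbb{M}$) and letting $v$ run through concentrating bubbles shows $c_{\mu,\nu,\lambda}\le B$ for all admissible parameters, so the analysis splits into the two cases $c_{\mu,\nu,\lambda}<B$ and $c_{\mu,\nu,\lambda}=B$. I would also fix the reading of $\mu_0$: if $m_\mu$ denotes the least energy of $(-\Delta)^su+\mu u=|u|^{p-1}u$, scaling gives $m_\mu=\frac{p-1}{2(p+1)}C_{p+1}^{(p+1)/(p-1)}\mu^{(p+1)/(p-1)-N/(2s)}$, which is strictly increasing in $\mu$ (as $\frac{p+1}{p-1}-\frac{N}{2s}>0$ for $p<2^\ast-1$), and $\mu_0$ is exactly the value with $m_{\mu_0}=B$. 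Finally I would show $\lambda\mapsto c_{\mu,\nu,\lambda}$ is continuous and non-increasing via the characterization $c=\inf_{u,v\ge0}\max_{t>0}E_{\mu,\nu,\lambda}(tu,tv)$.

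The workhorse is a compactness lemma: \emph{if $c_{\mu,\nu,\lambda}<B$ then $c_{\mu,\nu,\lambda}$ is attained}. I would take a minimizing sequence, upgrade it to a Palais--Smale sequence by Ekeland's principle (the Lagrange multiplier on the natural constraint $\mathbb{M}$ vanishes), and pass to a weak limit $(u_0,v_0)$ after a translation supplied by Lions' concentration--compactness lemma to defeat vanishing. The subcritical $u$-component creates no bubbling, while for the critical $v$-component I would use the fractional Brezis--Lieb splitting $\int|v_n|^{2^\ast}=\int|v_0|^{2^\ast}+\int|v_n-v_0|^{2^\ast}+o(1)$ together with the matching splitting of the Gagliardo seminorm; a nonvanishing bubble would carry energy at least $B$, forcing $c_{\mu,\nu,\lambda}\ge E_{\mu,\nu,\lambda}(u_0,v_0)+B\ge B$, contradicting $c<B$. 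Hence $v_n\to v_0$ strongly, $(u_0,v_0)$ is a nontrivial critical point with energy $c$, and replacing it by $(|u_0|,|v_0|)$ together with the strong maximum principle for $(-\Delta)^s$ and the linear coupling gives $u_0,v_0>0$; in particular no solution is semitrivial.

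With this lemma the existence statements reduce to proving $c_{\mu,\nu,\lambda}<B$. In case (1), $\mu\le\mu_0$, the pair $(U_\mu,0)$ built from the scalar ground state lies in $\mathbb{M}$ and gives $c\le m_\mu\le B$; for $\mu<\mu_0$ this is already strict, and for $\mu=\mu_0$ I would switch on a small positive second component and use that the coupling contribution $-\lambda\int U_\mu v<0$ lowers the energy at first order, so $c<m_{\mu_0}=B$. In case (2)(ii) I would fix $\lambda$ large and estimate $c$ with a Brezis--Nirenberg type test pair $(U,V_\epsilon)$, $U$ a fixed positive subcritical profile and $V_\epsilon$ a truncated bubble, so that the coupling gain $-\lambda\int U V_\epsilon\sim \lambda\,\epsilon^{(N-2s)/2}$ must beat the mass cost $\frac{\nu}{2}\int V_\epsilon^2$ of the bubble. \textbf{This is the main obstacle}: the competition of these two quantities is genuinely dimension dependent (the usual $N<4s$, $N=4s$, $N>4s$ trichotomy of the fractional Brezis--Nirenberg problem), which is what forces the result to be stated only for $\lambda$ in an interval; carrying out the sharp bubble asymptotics and choosing $\lambda$ close enough to $\sqrt{\mu\nu}$ to make $c<B$ is the delicate computation.

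For nonexistence and the location of the threshold I would argue as follows. Completing the square, $\mu u^2-2\lambda uv+\nu v^2=\mu_{\mathrm{eff}}u^2+\nu(v-\tfrac\lambda\nu u)^2$ with $\mu_{\mathrm{eff}}=\mu-\lambda^2/\nu$, and when $\lambda<\sqrt{(\mu-\mu_0)\nu}$ one has $\mu_{\mathrm{eff}}>\mu_0$. For $(u,v)\in\mathbb{M}$ I split cases: if $\int|v|^{2^\ast}\ge S_s^{N/(2s)}$ then $E_{\mu,\nu,\lambda}\ge\frac sN\int|v|^{2^\ast}\ge B$; otherwise the fractional Sobolev inequality gives $\|v\|_{D^s}^2>\int|v|^{2^\ast}$, so the Nehari identity yields $\|u\|_{D^s}^2+\mu_{\mathrm{eff}}\int u^2<\int|u|^{p+1}$, and the sharp subcritical embedding forces $\frac{p-1}{2(p+1)}\int|u|^{p+1}>m_{\mu_{\mathrm{eff}}}>B$, whence $E_{\mu,\nu,\lambda}>B$. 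Thus $c_{\mu,\nu,\lambda}=B$ there. Defining $\lambda_{\mu,\nu}:=\inf\{\lambda\in(0,\sqrt{\mu\nu}):c_{\mu,\nu,\lambda}<B\}$, monotonicity gives $c=B$ for $\lambda<\lambda_{\mu,\nu}$ and $c<B$ for $\lambda>\lambda_{\mu,\nu}$, while the two estimates above place $\lambda_{\mu,\nu}\in[\sqrt{(\mu-\mu_0)\nu},\sqrt{\mu\nu})$. Finally, that $c=B$ is \emph{not} attained for $\lambda<\lambda_{\mu,\nu}$ follows from strict monotonicity: a minimizer $(u_0,v_0)$ would be a positive pair with $\int u_0v_0>0$, so for $\lambda'\in(\lambda,\lambda_{\mu,\nu})$ one gets $c_{\mu,\nu,\lambda'}\le\max_{t>0}E_{\mu,\nu,\lambda'}(tu_0,tv_0)<\max_{t>0}E_{\mu,\nu,\lambda}(tu_0,tv_0)=B$, contradicting $c_{\mu,\nu,\lambda'}=B$. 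Hence there is no ground state for $\lambda<\lambda_{\mu,\nu}$, completing the scheme.
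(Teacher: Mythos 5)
Your overall skeleton --- the Nehari level $c_{\mu,\nu,\lambda}$ versus the threshold $B=\frac{s}{N}S_s^{N/(2s)}$, compactness below $B$ via Brezis--Lieb splitting, the completed-square lower bound giving $c_{\mu,\nu,\lambda}=B$ for $0<\lambda\le\sqrt{(\mu-\mu_0)\nu}$, monotonicity of $c$ in $\lambda$ over nonnegative pairs, the maximum principle for positivity, and the strict-monotonicity argument ruling out a minimizer when $\lambda<\lambda_{\mu,\nu}$ --- coincides with the paper's (the paper works in radial spaces rather than recentering by translations, but that is a cosmetic difference). The genuine gap is exactly the step you yourself flag as ``the main obstacle'': producing at least one $\lambda<\sqrt{\mu\nu}$ with $c_{\mu,\nu,\lambda}<B$ when $\mu>\mu_0$. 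Without this, your $\lambda_{\mu,\nu}=\inf\{\lambda:c_{\mu,\nu,\lambda}<B\}$ could equal $\sqrt{\mu\nu}$, part (2)(ii) would be vacuous, and the asserted inclusion $\lambda_{\mu,\nu}\in[\sqrt{(\mu-\mu_0)\nu},\sqrt{\mu\nu})$ would fail. Your proposed route --- a Brezis--Nirenberg pair $(U,V_\epsilon)$ with a truncated bubble, balancing the coupling gain against the mass cost of $V_\epsilon$ --- is not carried out, and as you note it runs straight into the $N>4s$, $N=4s$, $N<4s$ trichotomy; it is far from clear it closes, and the theorem carries no such dimensional restriction.

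The idea you are missing is the paper's test pair with \emph{both components proportional to the same subcritical profile}: $(tw_{\beta,\gamma},\,t\,a\,w_{\beta,\gamma})$, where $w_{\beta,\gamma}$ is the unique (Frank--Lenzmann--Silvestre) positive ground state of $(-\Delta)^s u+\beta u=\gamma u^{p}$ and $a>0$ is a free ratio. Discarding the negative critical term $-\frac{t^{2^{\ast}}a^{2^{\ast}}}{2^{\ast}}\int w_{\beta,\gamma}^{2^{\ast}}$ and absorbing the coupling by choosing $\beta=\frac{\mu+\nu a^{2}-2\lambda a}{1+a^{2}}$, $\gamma=\frac{1}{1+a^{2}}$, one gets
$\max_{t>0}E_{\mu,\nu,\lambda}(tw_{\beta,\gamma},taw_{\beta,\gamma})<(1+a^{2})f_{\beta,\gamma}(w_{\beta,\gamma})$, and the right-hand side is computed \emph{exactly} by the scaling identity \eqref{int46}. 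With $\theta=\frac{p+1}{p-1}-\frac{N}{2s}>0$, the condition that this bound be at most $B$ reads $(1+a^{2})^{N/(2s)}(\mu+\nu a^{2}-2\lambda a)^{\theta}\le\mu_{0}^{\theta}$, i.e.\ $\lambda\ge h_{\mu,\nu}(a)$ with $h_{\mu,\nu}$ as in \eqref{int47}; minimizing over $a$ gives $\widetilde\lambda_{\mu,\nu}=\min_{a>0}h_{\mu,\nu}(a)<\sqrt{\mu\nu}$, so $c_{\mu,\nu,\lambda}<B$ for every $\lambda\in[\widetilde\lambda_{\mu,\nu},\sqrt{\mu\nu})$ --- with no bubble asymptotics and no dimensional case analysis. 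This single algebraic computation is what makes (2)(ii) non-vacuous and pins $\lambda_{\mu,\nu}\le\widetilde\lambda_{\mu,\nu}<\sqrt{\mu\nu}$; the rest of your outline is consistent with the paper's proof.
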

 We sketch our idea of the proof. It is well known that the Sobolev embedding ${H^{s}(\mathbb{R}^{N})}\hookrightarrow L^{p}(\mathbb{R}^{N})$  are not compact for $2\leq p\leq2^{\ast}$. Hence, the associated functional of problem \eqref{int1} does not satisfy the Palais-Smale condition. In order to overcome the lack of compactness, we first set our work space in $H_{r}^{s}(\mathbb{R}^{N})\times H_{r}^{s}(\mathbb{R}^{N})$, where
 $${H_{r}^{s}(\mathbb{R}^{N})}=\{\varphi\in H^{s}(\mathbb{R}^{N}): \varphi \ \text{is radial}\}$$
 and $H^{s}_{r}(\mathbb{R}^{N})$ is endowed with the $H^{s}(\mathbb{R}^{N})$ topology: $\|\varphi\|_{H^{s}_{r}(\mathbb{R}^{N})}=\|\varphi\|_{H^{s}(\mathbb{R}^{N})}$.
Let $${D_{r}^{s}(\mathbb{R}^{N})}=\{\varphi\in D^{s}(\mathbb{R}^{N}): \varphi \ \text{is radial}\}.$$

 By  properties of symmetric radial decreasing rearrangement, we know $C_{p+1}$ is achieved by radial functions in ${H^{s}(\mathbb{R}^{N})}$ and $S_{s}$ is achieved by radial functions in ${D^{s}(\mathbb{R}^{N})}$.  By principle of symmetric criticality (Theorem 1.28 in \cite{MWM}), the solutions for \eqref{int1} in function space ${H_{r}^{s}(\mathbb{R}^{N})}\times {H_{r}^{s}(\mathbb{R}^{N})}$ are also the solutions in function space ${H^{s}(\mathbb{R}^{N})}\times {H^{s}(\mathbb{R}^{N})}$.

  Second, we show that if the critical value $c$ of the functional \eqref{pp} is strictly less than $\frac{s}{N}S^{\frac{N}{2s}}_{s}$, then the corresponding critical sequence will satisfy $(PS)_{c}$ condition.

  Finally, we prove that mountain pass value for \eqref{pp} is less than $\frac{s}{N}S^{\frac{N}{2s}}_{s}$ under some proper conditions.
\begin{remark}
The proof of Theorem \ref{Th1} is totally variational. We do not need to use the regularity of the solutions to the system \eqref{int1}(For the fractional Laplacian equation, there are no general results for regularity(of the solutions) higher than the one derived from the Sobolev imbedding). The method we use here is different from the one used in\cite{CZ6}, where they use a limiting argument to deal with the problem in Laplacian case and the $C^{2}$ regularity of the solutions are needed.
\end{remark}

The paper is organized as follows. In section \ref{S2}, we introduce some preliminaries that will be used to prove Theorem \ref{Th1}. In section \ref{sec1}, we prove Theorem \ref{Th1}.

\section{Some Preliminaries }\label{S2}
As mentioned earlier, we will only work in the radial function space. Set $\mathcal{H}_{r}$ and $\mathcal{D}_{r}$  as the following
$$
\mathcal{H}_{r}=\{(u,v)\in \mathcal{H}: u,~v \ \text{are radial}\},
$$

$$
\mathcal{D}_{r}=\{(u,v)\in \mathcal{D}: u,~v \ \text{are radial}\},
$$
with norm deduced from $\mathcal H$ and $\mathcal D$ respectively. Define the Nehari Manifold in $\mathcal{H}_r$ as
$$
\mathbb{M}_{r}=\{(u,v)\in \mathbb{M}: u,~v \ \text{are radial}\}.
$$

Theorem \ref{Th1} is proved by  Mountain Pass Theorem \cite{AAPH}, we first show that $E_{\mu,\nu,\lambda}$ has a $(PS)_{c}$ sequence in $\mathcal{H}_r$. Choose $\varphi,\psi\in\mathcal{C}^{\infty}_{0}(\mathbb{R}^{N})\cap H_r(\mathbb{R}^N)$ with $\varphi,\psi\not\equiv0$ and $supp(\varphi)\bigcap supp(\psi)=\emptyset$, then there exists $t_{0}>0$ such that $E_{\mu,\nu,\lambda}(t_{0}\varphi,t_{0}\psi)<0$ for all $t\ge t_0$. Take $(u_{0},v_{0})=(t\varphi,t\psi)$ with $t\ge t_0$ large enough. Let
$$
\Gamma=\{\gamma\in\mathcal{C}([0,1],\mathcal{H}_{r}):\gamma(0)=(0,0),\quad \gamma(1)=(u_{0},v_{0})\}.
$$
Define
$$
\widehat{A}_{\mu,\nu,\lambda}:=\inf \limits_{\gamma\in \Gamma}\max \limits_{t\in [0,1]}E_{\mu,\nu,\lambda}(\gamma(t)).
$$

\begin{lemma}\label{Lem1}
Under the condition $0<\lambda<\sqrt{\mu\nu}$, there exists a Palais-Smale sequence $\{(u_{n},v_{n})\}\subset \mathcal{H}_{r}$ such that
\begin{align}\label{int4}
E_{\mu,\nu,\lambda}(u_{n},v_{n})\rightarrow\widehat{A}_{\mu,\nu,\lambda}\ \text{and} \ E'_{\mu,\nu,\lambda}(u_{n},v_{n})\rightarrow0 \ \text{as} \ n\rightarrow+\infty.
\end{align}
\end{lemma}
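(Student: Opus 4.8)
The plan is to obtain the Palais-Smale sequence as a direct consequence of the Mountain Pass Theorem \cite{AAPH} applied to $E_{\mu,\nu,\lambda}$ on the Hilbert space $\mathcal{H}_r$, so the whole task reduces to verifying the mountain pass geometry. First I would record that $E_{\mu,\nu,\lambda}\in C^{1}(\mathcal{H}_r,\mathbb{R})$ with $E_{\mu,\nu,\lambda}(0,0)=0$; this is standard, since the nonlinearities $|u|^{p+1}$ and $|v|^{2^{\ast}}$ have subcritical or critical growth and the embeddings $H^{s}_{r}(\mathbb{R}^{N})\hookrightarrow L^{p+1}(\mathbb{R}^{N})$, $L^{2^{\ast}}(\mathbb{R}^{N})$ are continuous.

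The crucial ingredient is the condition $0<\lambda<\sqrt{\mu\nu}$, which I would use to show that the quadratic part of $E_{\mu,\nu,\lambda}$ is positive definite. Indeed, the pointwise quadratic form associated with the matrix $\begin{pmatrix}\mu & -\lambda\\ -\lambda & \nu\end{pmatrix}$ satisfies $\mu u^{2}+\nu v^{2}-2\lambda uv\ge \delta(u^{2}+v^{2})$ for some $\delta=\delta(\mu,\nu,\lambda)>0$ precisely because $\mu\nu-\lambda^{2}>0$. Integrating, this yields
$$
\frac{1}{2}\|(u,v)\|_{\mathcal{D}}^{2}+\frac{1}{2}\int_{\mathbb{R}^{N}}(\mu u^{2}+\nu v^{2})\,dx-\lambda\int_{\mathbb{R}^{N}}uv\,dx\ \ge\ c_{0}\|(u,v)\|_{\mathcal{H}}^{2}
$$
for a constant $c_{0}>0$; that is, the quadratic part is equivalent to $\|(u,v)\|_{\mathcal{H}}^{2}$.

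With this coercivity of the quadratic part in hand, the geometry near the origin follows from the Sobolev inequalities \eqref{intabc} and \eqref{int2}: I would estimate
$$
E_{\mu,\nu,\lambda}(u,v)\ \ge\ c_{0}\|(u,v)\|_{\mathcal{H}}^{2}-C_{1}\|(u,v)\|_{\mathcal{H}}^{p+1}-C_{2}\|(u,v)\|_{\mathcal{H}}^{2^{\ast}},
$$
and since $p+1>2$ and $2^{\ast}>2$, there exist $\rho,\alpha>0$ with $E_{\mu,\nu,\lambda}(u,v)\ge\alpha$ whenever $\|(u,v)\|_{\mathcal{H}}=\rho$. The second geometric requirement is already arranged in the construction preceding the lemma: the endpoint $(u_{0},v_{0})=(t\varphi,t\psi)$ with $t$ large satisfies $E_{\mu,\nu,\lambda}(u_{0},v_{0})<0$, and after possibly enlarging $t$ we also have $\|(u_{0},v_{0})\|_{\mathcal{H}}>\rho$ because $\|(t\varphi,t\psi)\|_{\mathcal{H}}=t\|(\varphi,\psi)\|_{\mathcal{H}}\to\infty$.

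Having both pieces of the mountain pass geometry, I would invoke the version of the Mountain Pass Theorem that produces a Palais-Smale sequence at the minimax level $\widehat{A}_{\mu,\nu,\lambda}$ without assuming $(PS)$ (via Ekeland's variational principle or the deformation argument of \cite{AAPH}), which yields exactly a sequence $\{(u_{n},v_{n})\}\subset\mathcal{H}_{r}$ with $E_{\mu,\nu,\lambda}(u_{n},v_{n})\to\widehat{A}_{\mu,\nu,\lambda}$ and $E'_{\mu,\nu,\lambda}(u_{n},v_{n})\to0$. I expect the only genuinely delicate point to be the positive-definiteness estimate: controlling the indefinite cross term $-\lambda\int_{\mathbb{R}^{N}}uv\,dx$ is exactly what forces the hypothesis $\lambda<\sqrt{\mu\nu}$, and obtaining a clean equivalence with $\|\cdot\|_{\mathcal{H}}^{2}$ is what makes the near-origin geometry work. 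Everything else is routine verification; note in particular that no compactness is used at this stage, so the verification of $(PS)_{c}$ itself is deferred to the later analysis.
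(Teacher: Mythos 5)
Your proposal is correct and follows essentially the same route as the paper: the paper also verifies the mountain pass geometry by using $\lambda<\sqrt{\mu\nu}$ to get $\mu u^{2}+\nu v^{2}-2\lambda uv\ge\tau(u^{2}+v^{2})$ (choosing $\tau$ with $\lambda^{2}=(\mu-\tau)(\nu-\tau)$, which is your positive-definiteness of the coupling matrix), then bounds the nonlinear terms via the Sobolev embeddings and invokes the Mountain Pass Theorem of \cite{AAPH} to produce the $(PS)_{\widehat{A}_{\mu,\nu,\lambda}}$ sequence. No substantive difference.
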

\begin{proof}
We first claim that $E_{\mu,\nu,\lambda}$ possesses a mountain pass geometry around $(0,0)$;
\newline
(1) there exist $\alpha,\rho>0$, such that $E_{\mu,\nu,\lambda}(u,v)>\alpha$ for all $||(u,v)||_{\mathcal{H}_{r}}=\rho$;
\newline
(2) there exist  $(u_{0},v_{0})\in\mathcal{H}_{r}$ such that $||(u_{0},v_{0})||_{\mathcal{H}_{r}}>\rho$ and $E_{\mu,\nu,\lambda}(u_{0},v_{0})<0$.

To claim (1), since $\lambda<\sqrt{\mu\nu }$, we can take a small $\tau>0$ such that $\lambda^2=(\mu-\tau)(\nu-\tau)$, then by Sobolev imbedding,
\begin{align*}
E_{\mu,\nu,\lambda}(u,v)&=\frac{1}{2}||(u,v)||^{2}_{\mathcal{D}_{r}}+\frac{1}{2}\int_{\mathbb{R}^{N}}(\mu u^{2}+\nu v^{2})dx-\frac{1}{p+1}\int_{\mathbb{R}^{N}}|u|^{p+1}dx\\
&-\frac{1}{2^{\ast}}\int_{\mathbb{R}^{N}}|v|^{2^{\ast}}dx-\lambda \int_{\mathbb{R}^{N}}uvdx\\
&\geq\frac{1}{2}||(u,v)||^{2}_{\mathcal{D}_{r}}+\frac{\tau}{2}\int_{\mathbb{R}^N}(u^2+v^2)-\frac{1}{p+1}\int_{\mathbb{R}^{N}}|u|^{p+1}dx-\frac{1}{2^{\ast}}\int_{\mathbb{R}^{N}}|v|^{2^{\ast}}dx\\
&\geq\min(\frac{1}{2},\frac{\tau}{2})||(u,v)||^{2}_{\mathcal{H}_{r}}-C_{1}||(u,v)||^{p+1}_{\mathcal{H}_{r}}-C_{2}||(u,v)||^{2^{\ast}}_{\mathcal{H}_{r}}.
\end{align*}

Choose $\rho>0$ sufficiently small, if $||(u,v)||^{2}_{\mathcal{H}_{r}}=\rho$, then
$$
E_{\mu,\nu,\lambda}(u,v)\geq \min(\frac{1}{2},\frac{\tau}{2}) ||(u,v)||^{2}_{\mathcal{H}_{r}}-C_{1}||(u,v)||^{p+1}_{\mathcal{H}_{r}}-C_{2}||(u,v)||^{2^{\ast}}_{\mathcal{H}_{r}}>\frac{1}{4}C_{3}\rho^{2}>0.
$$

(2) is obvious when we choose $t$ large enough such that $\|(u_0,v_0)\|_{\mathcal H_{r}}=t\|(\varphi,\psi)\|_{\mathcal{H}_{r}}>\rho$.

By Mountain Pass Theorem \cite{AAPH}, for the constant $0<\widehat{A}_{\mu,\nu,\lambda}:=\inf \limits_{\gamma\in \Gamma}\max \limits_{t\in [0,1]}E_{\mu,\nu,\lambda}(\gamma(t)),$ there exist a $(PS)_{\widehat{A}_{\mu,\nu,\lambda}}$ sequence $\{(u_{n},v_{n})\}\subset \mathcal{H}_{r}$, that is
\begin{align*}
E_{\mu,\nu,\lambda}(u_{n},v_{n})\rightarrow\widehat{A}_{\mu,\nu,\lambda}\ \text{and} \ E'_{\mu,\nu,\lambda}(u_{n},v_{n})\rightarrow0 \ \text{as} \ n\rightarrow+\infty,
\end{align*}
where $$\Gamma=\{\gamma\in\mathcal{C}([0,1],\mathcal{H}_{r}):\gamma(0)=(0,0),\gamma(1)=(u_{0},v_{0})\}.$$
\end{proof}

Define
$$
\overline{A}_{\mu,\nu,\lambda}=\inf \limits_{\mathcal{H}_{r}\setminus\{(0,0)\}}\max \limits_{t>0}E_{\mu,\nu,\lambda}(tu,tv),
$$
and
\begin{align*}
A_{\mu,\nu,\lambda}:=\inf \limits_{(u,v)\in \mathbb{M}_r}E_{\mu,\nu,\lambda}(u,v)=\inf \limits_{(u,v)\in \mathbb{M}_r}\left((\frac{1}{2}-\frac{1}{p+1})\int_{\mathbb{R}^{N}}(|u|^{p+1}dx+\frac{s}{N}\int_{\mathbb{R}^{N}}|v|^{2^{\ast}}dx\right).\\
\end{align*}
\begin{lemma}\label{Lem5}
Under the condition $0<\lambda<\sqrt{\mu\nu}$, $\widehat{A}_{\mu,\nu,\lambda}= \overline{A}_{\mu,\nu,\lambda}=A_{\mu,\nu,\lambda}$.
\end{lemma}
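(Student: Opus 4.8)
The plan is to prove the three-way equality through three separate relations: first $\overline{A}_{\mu,\nu,\lambda}=A_{\mu,\nu,\lambda}$ via an analysis of the fibering map along rays, then $A_{\mu,\nu,\lambda}\le \widehat{A}_{\mu,\nu,\lambda}$ by showing that every admissible path must cross the Nehari manifold, and finally $\widehat{A}_{\mu,\nu,\lambda}\le \overline{A}_{\mu,\nu,\lambda}$ by building nearly optimal paths out of rays.

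For the first relation I would fix $(u,v)\in\mathcal{H}_{r}\setminus\{(0,0)\}$ and study $g(t)=E_{\mu,\nu,\lambda}(tu,tv)=\frac{t^{2}}{2}Q(u,v)-\frac{t^{p+1}}{p+1}\int_{\mathbb{R}^N}|u|^{p+1}dx-\frac{t^{2^{\ast}}}{2^{\ast}}\int_{\mathbb{R}^N}|v|^{2^{\ast}}dx$, where $Q(u,v)=\|(u,v)\|_{\mathcal{D}}^{2}+\int_{\mathbb{R}^N}(\mu u^{2}+\nu v^{2})dx-2\lambda\int_{\mathbb{R}^N}uv\,dx$. The condition $0<\lambda<\sqrt{\mu\nu}$ makes the quadratic form $\mu u^{2}+\nu v^{2}-2\lambda uv$ positive definite, so $Q(u,v)>0$, and at least one of $\int_{\mathbb{R}^N}|u|^{p+1}dx$, $\int_{\mathbb{R}^N}|v|^{2^{\ast}}dx$ is positive. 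Hence $g(t)>0$ for small $t$ and $g(t)\to-\infty$ as $t\to\infty$, while $g'(t)=0$ reduces to $Q(u,v)=t^{p-1}\int_{\mathbb{R}^N}|u|^{p+1}dx+t^{2^{\ast}-2}\int_{\mathbb{R}^N}|v|^{2^{\ast}}dx$, whose right-hand side increases strictly from $0$ to $+\infty$. Thus $g$ has a unique critical point $t^{\ast}=t^{\ast}(u,v)>0$, which is its global maximum, and $g'(t)=0$ is precisely the condition $(tu,tv)\in\mathbb{M}_{r}$. Consequently $\max_{t>0}E_{\mu,\nu,\lambda}(tu,tv)=E_{\mu,\nu,\lambda}(t^{\ast}u,t^{\ast}v)$ with $(t^{\ast}u,t^{\ast}v)\in\mathbb{M}_{r}$, giving $\overline{A}_{\mu,\nu,\lambda}\ge A_{\mu,\nu,\lambda}$; conversely $t^{\ast}=1$ for every $(u,v)\in\mathbb{M}_{r}$, so $E_{\mu,\nu,\lambda}(u,v)=\max_{t>0}E_{\mu,\nu,\lambda}(tu,tv)\ge\overline{A}_{\mu,\nu,\lambda}$, which yields the reverse inequality.

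For $A_{\mu,\nu,\lambda}\le\widehat{A}_{\mu,\nu,\lambda}$ I set $N(u,v)=Q(u,v)-\int_{\mathbb{R}^N}(|u|^{p+1}+|v|^{2^{\ast}})dx$, so $\mathbb{M}_{r}=\{N=0\}\cap(\mathcal{H}_{r}\setminus\{(0,0)\})$. The quadratic estimate from Lemma \ref{Lem1} gives $N(u,v)>0$ whenever $0<\|(u,v)\|_{\mathcal{H}_{r}}\le\rho$, while $N(u_{0},v_{0})<0$ since $(u_{0},v_{0})=(t\varphi,t\psi)$ lies past the maximizing time on its own ray. For any $\gamma\in\Gamma$ the scalar function $t\mapsto N(\gamma(t))$ is nonnegative while $\gamma$ stays in the ball of radius $\rho$ and is negative at $t=1$, so continuity produces $\bar t\in(0,1)$ with $N(\gamma(\bar t))=0$ and $\gamma(\bar t)\neq(0,0)$, i.e. $\gamma(\bar t)\in\mathbb{M}_{r}$. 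Then $\max_{t}E_{\mu,\nu,\lambda}(\gamma(t))\ge E_{\mu,\nu,\lambda}(\gamma(\bar t))\ge A_{\mu,\nu,\lambda}$, and taking the infimum over $\Gamma$ proves the inequality.

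The remaining inequality $\widehat{A}_{\mu,\nu,\lambda}\le\overline{A}_{\mu,\nu,\lambda}$ is where I expect the main obstacle, because the admissible paths are pinned at the fixed endpoint $(u_{0},v_{0})$ whereas $\overline{A}_{\mu,\nu,\lambda}$ is an infimum over all rays. Given $\varepsilon>0$, I would pick a direction $(u,v)$ with $\max_{t>0}E_{\mu,\nu,\lambda}(tu,tv)\le\overline{A}_{\mu,\nu,\lambda}+\varepsilon$, run out along its ray to a point $P=T_{1}(u,v)$ with $E_{\mu,\nu,\lambda}(P)<0$ (so the maximal energy on this first segment is at most $\overline{A}_{\mu,\nu,\lambda}+\varepsilon$), and then join $P$ to $(u_{0},v_{0})$ inside the sublevel set $\{E_{\mu,\nu,\lambda}<0\}$; since $\overline{A}_{\mu,\nu,\lambda}\ge\alpha>0$, the second segment never raises the maximum above $\overline{A}_{\mu,\nu,\lambda}+\varepsilon$, and letting $\varepsilon\to0$ concludes. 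The delicate point to verify carefully is exactly the path-connectedness of the negative sublevel set far from the origin (using that $E_{\mu,\nu,\lambda}\to-\infty$ along every ray), which is what replaces the more standard free-endpoint formulation of the mountain pass level and legitimizes comparing the pinned value $\widehat{A}_{\mu,\nu,\lambda}$ with the ray infimum.
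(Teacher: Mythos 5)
Your proposal is correct, and two of its three pieces coincide with the paper's: the identification $\overline{A}_{\mu,\nu,\lambda}=A_{\mu,\nu,\lambda}$ via the unique maximum of the fibering map $t\mapsto E_{\mu,\nu,\lambda}(tu,tv)$ (which lands on $\mathbb{M}_r$), and the inequality $\widehat{A}_{\mu,\nu,\lambda}\le\overline{A}_{\mu,\nu,\lambda}$ by running out along a near-optimal ray into $\{E_{\mu,\nu,\lambda}<0\}$ and then connecting to $(u_0,v_0)$. The connectedness issue you flag as the delicate point is resolved in the paper exactly as you anticipate: one works in the two-dimensional subspace $\mathcal S$ spanned by $(u,v)$ and $(u_0,v_0)$ and chooses $R$ so large that $E_{\mu,\nu,\lambda}<0$ on the whole radius-$R$ sphere of $\mathcal S$ (uniformity comes from compactness of the unit sphere of $\mathcal S$), then travels along that sphere before descending the ray of $(u_0,v_0)$. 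Where you genuinely diverge is the remaining inequality $A_{\mu,\nu,\lambda}\le\widehat{A}_{\mu,\nu,\lambda}$: you use the topological crossing argument that every $\gamma\in\Gamma$ passes through $\mathbb{M}_r$ (the Nehari functional $N$ is positive on the small sphere and negative at the endpoint, the implication $E_{\mu,\nu,\lambda}(u_0,v_0)<0\Rightarrow N(u_0,v_0)<0$ being automatic since $\tfrac1{p+1},\tfrac1{2^\ast}<\tfrac12$), whereas the paper takes the $(PS)_{\widehat{A}_{\mu,\nu,\lambda}}$ sequence from Lemma~\ref{Lem1}, proves it is bounded, projects $(u_n,v_n)$ onto $\mathbb{M}_r$ by scalars $t_n$, and asserts $t_n\to1$ to get $\widehat{A}_{\mu,\nu,\lambda}\ge A_{\mu,\nu,\lambda}$. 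Your route is more elementary and arguably cleaner: it needs no Palais--Smale sequence and avoids justifying $t_n\to 1$ (which the paper calls ``easy to see'' but in fact requires lower bounds keeping $\|(u_n,v_n)\|$ and the nonlinear terms away from zero); the paper's route simply reuses machinery it has already set up. The one detail you should make explicit is the choice of the crossing time: take $t_1=\sup\{t:\|\gamma(t)\|_{\mathcal H_r}\le\rho\}$, so that on $[t_1,1]$ the path stays outside the $\rho$-ball, $N(\gamma(t_1))>0$, $N(\gamma(1))<0$, and the resulting zero $\bar t$ satisfies $\|\gamma(\bar t)\|\ge\rho>0$, guaranteeing $\gamma(\bar t)\in\mathbb{M}_r$ rather than $\gamma(\bar t)=(0,0)$.
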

\begin{proof}
We first claim $\overline{A}_{\mu,\nu,\lambda}=A_{\mu,\nu,\lambda}$. For any $(u,v)\in\mathcal{H}_{r}$ with $(u,v)\not\equiv(0,0)$, there exist a unique $t_{\lambda,u,v}>0$ such that
\begin{align}\label{Lem2}
\max \limits_{t>0}E_{\mu,\nu,\lambda}(tu,tv)&=E_{\mu,\nu,\lambda}(t_{\lambda,u,v}u,t_{\lambda,u,v}v)\\\nonumber
&=(\frac{1}{2}-\frac{1}{p+1})t^{p+1}_{\lambda,u,v}\int_{\mathbb{R}^{N}}|u|^{p+1}dx+t^{2^{\ast}}_{\lambda,u,v}\left(\frac{1}{2}-\frac{1}{2^{\ast}}\right)\int_{\mathbb{R}^{N}}|v|^{2^{\ast}}dx,
\end{align}
where $t_{\lambda,u,v}>0$ satisfies $\varphi(\lambda,u,v,t_{\lambda,u,v})=0$ and
\begin{align*}
\varphi(\lambda,u,v,t)&=||(u,v)||^{2}_{\mathcal{D}_{r}}+\int_{\mathbb{R}^{N}}(\mu u^{2}+\nu v^{2})dx\\
&-t^{p-1}\int_{\mathbb{R}^{N}}|u|^{p+1}dx-t^{2^{\ast}-2}\int_{\mathbb{R}^{N}}|v|^{2^{\ast}}dx-2\lambda \int_{\mathbb{R}^{N}}uvdx.
\end{align*}

Which implies that $(t_{\lambda,u,v}u,t_{\lambda,u,v}v)\in \mathbb{M}_{r}$.  From \eqref{Lem2}, we get for any $(u,v)\in \mathcal H_r$, $(u,v)\not\equiv (0,0)$,
$$
\max \limits_{t>0}E_{\mu,\nu,\lambda}(tu,tv)=E_{\mu,\nu,\lambda}(t_{\lambda,u,v}u,t_{\lambda,u,v}v)\ge A_{\mu,\nu,\lambda},
$$
therefore $\overline{A}_{\mu,\nu,\lambda}\ge A_{\mu,\nu,\lambda}$. Similarly, we can show that $\overline{A}_{\mu,\nu,\lambda}\le A_{\mu,\nu,\lambda}$, thus $\overline{A}_{\mu,\nu,\lambda}= A_{\mu,\nu,\lambda}$.

Next, we claim $\widehat{A}_{\mu,\nu,\lambda}= \overline{A}_{\mu,\nu,\lambda}.$

For any $\epsilon>0$, we can take a $(u,v)\not\equiv(0,0)$ such that
\begin{equation}
E_{\mu,\nu,\lambda}(t_{\lambda,u,v}u,t_{\lambda,u,v}v)=\max \limits_{t>0}E_{\mu,\nu,\lambda}(tu,tv)<\overline{A}_{\mu,\nu,\lambda}+\epsilon.
\end{equation}
We take a two dimensional space $\mathcal{S}$ in $\mathcal{H}_{r}$ contain $(u,v)$ and $(u_{0},v_{0})$ and we choose a large $R>\max(\|(t_{\lambda,u,v}u,t_{\lambda,u,v}v)\|,\|(u_{0},v_{0})\|)>0$, such that $E_{\mu,\nu,\lambda}(u,v)<0$ for all $(u,v)\in \mathcal{S}$ with $\|(u,v)\|=R$.
Now, we define a path $\Gamma$ connecting $(0,0)$ and $(u_{0},v_{0})$ as follows. If $t\in [0,\frac{1}{2}]$, let $(u_{t},v_{t})=(\frac{2tRu}{\|u\|},\frac{2tRv}{\|v\|})$ which is the segment connecting $(0,0)$ and $(\frac{Ru}{\|u\|},\frac{Rv}{\|v\|})$. If $t\in(\frac{1}{2},\frac{3}{4})$, let $(u_{t},v_{t})$ be the arc in $\mathcal{S}$, satisfying $(u_{\frac{1}{2}},v_{\frac{1}{2}})=(\frac{Ru}{\|u\|},\frac{Rv}{\|v\|}),\ \ (u_{\frac{3}{4}},v_{\frac{3}{4}})=(\frac{Ru_{0}}{\|u_{0}\|},\frac{Rv_{0}}{\|v_{0}\|})$ and $\|(u_{t},v_{t})\|=R.$ If $t\in(\frac{3}{4},1]$ define $(u_{t},v_{t})=(\frac{4(1-t)Ru_{0}}{\|u_{0}\|}+(4t-3)u_{0},\frac{4(1-t)Rv_{0}}{\|v_{0}\|}+(4t-3)v_{0})$ which is a segment connecting $(\frac{Ru_{0}}{\|u_{0}\|},\frac{Rv_{0}}{\|v_{0}\|})$ and $(u_{0},v_{0}).$ Let $\gamma(t)=(u_{t}(\cdot),v_{t}(\cdot)), t\in[0,1],$ then it is easy to check that $$\overline{A}_{\mu,\nu,\lambda}+\epsilon>\max \limits_{k>0}E_{\mu,\nu,\lambda}(ku,kv)=\max \limits_{t\in[0,\frac{1}{2}]}E_{\mu,\nu,\lambda}(u_{t},v_{t})=\max \limits_{t\in[0,1]}E_{\mu,\nu,\lambda}(\gamma(t))\geq \widehat{A}_{\mu,\nu,\lambda},$$
which implies that $\overline{A}_{\mu,\nu,\lambda}\geq \widehat{A}_{\mu,\nu,\lambda}.$ Thus $\widehat{A}_{\mu,\nu,\lambda}\leq \overline{A}_{\mu,\nu,\lambda}=A_{\mu,\nu,\lambda}.$

Let $\{(u_{n},v_{n})\}$ is a $(PS)_{\widehat{A}_{\mu,\nu,\lambda}}$ sequence, then
$$
E_{\mu,\nu,\lambda}(u_{n},v_{n})\rightarrow\widehat{A}_{\mu,\nu,\lambda}\ \text{and}\  (E_{\mu,\nu,\lambda})'(u_{n},v_{n})\rightarrow0,\ \text{as}\ n\rightarrow+\infty.
$$

We claim $\{(u_{n},v_{n})\}$ is bounded in $\mathcal{H}_{r}$.
For $n$ large enough, we have since $\lambda<\sqrt{\mu\nu }$, we can take a small $\tau>0$ such that $\lambda^2=(\mu-\tau)(\nu-\tau)$, then by Sobolev imbedding
\begin{align}\label{q}
\widehat{A}_{\mu,\nu,\lambda}+o(1)(\|(u_{n},v_{n})\|)&=E_{\mu,\nu,\lambda}(u_{n},v_{n})-\frac{1}{p+1}<E'_{\mu,\nu,\lambda}(u_{n},v_{n}),(u_{n},v_{n})>\\\nonumber
&=(\frac{1}{2}-\frac{1}{p+1})\left[||(u,v)||^{2}_{\mathcal{D}_{r}}+\int_{\mathbb{R}^{N}}(\mu u^{2}+\nu v^{2})dx-2\lambda \int_{\mathbb{R}^{N}}uvdx\right]\\\nonumber
&+(\frac{1}{p+1}-\frac{1}{2^{\ast}})\int_{\mathbb{R}^{N}}u^{p+1}_{n}dx\\\nonumber
&\geq (\frac{1}{2}-\frac{1}{p+1})\left[||(u,v)||^{2}_{\mathcal{D}_{r}}+\tau\int_{\mathbb{R}^{N}}( u^{2}+ v^{2})dx\right]\\\nonumber
&\geq\min(1,\tau)(\frac{1}{2}-\frac{1}{p+1})||(u,v)||^{2}_{\mathcal{H}_{r}}.
\end{align}
Consequently, $\{(u_{n},v_{n})\}$ is bounded in $\mathcal{H}_{r}$.  Since $\widehat{A}_{\mu,\nu,\lambda}>0$,  $\{(u_{n},v_{n})\}$ is not $(0,0)$.

Let $(t_nu_{n},t_nv_{n})\in \mathbb M_r$, from
$$
\langle (E_{\mu,\nu,\lambda})'(u_{n},v_{n}),(u_{n},v_{n})\rangle\to 0\ \ \text{as}\ \ n\rightarrow+\infty,
$$
it is easy to see that $t_n\to 1$ as $n\to \infty$.
So
\begin{align*}
\widehat{A}_{\mu,\nu,\lambda}&=\lim \limits_{n\rightarrow\infty}E_{\mu,\nu,\lambda}(u_{n},v_{n})=\lim \limits_{n\rightarrow\infty}E_{\mu,\nu,\lambda}(t_nu_{n},t_nv_{n})\\
&\ge \liminf\limits_{n\to\infty}E_{\mu,\nu,\lambda}(t_nu_{n},t_nv_{n})\ge A_{\mu,\nu,\lambda}=\bar{A}_{\mu,\nu,\lambda}.
\end{align*}
Consequently, $\widehat{A}_{\mu,\nu,\lambda}=\overline{A}_{\mu,\nu,\lambda}=A_{\mu,\nu,\lambda}$. This completes the proof of Lemma \ref{Lem5}.
\end{proof}
\begin{remark}
By properties of symmetric radial decreasing rearrangement, it is easy to show that the value $\widehat{A}_{\mu,\nu,\lambda}=\overline{A}_{\mu,\nu,\lambda}=A_{\mu,\nu,\lambda}$ defined in $\mathcal{H}_{r}$ is the same as the value defined in $\mathcal{H}$. So the ground state solution in $\mathcal H_r$ is also the ground state solution in $\mathcal H$.
\end{remark}
In order to prove Theorem \ref{Th1}, we need the following lemma.
\begin{lemma}(\cite{HBEL}{Brezis-Lieb Lemma})\label{k}
Let $\{u_{n}\}\subset L^{p+1}(\mathbb{R}^{N})$, $0<p<\infty$. If $\{u_{n}\}$ is bounded in $L^{p+1}(\mathbb{R}^{N})$ and $u_{n}\rightarrow u\ \text{a.e on} \ \mathbb{R}^{N} $, then $$ \int_{\mathbb{R}^{N}}|u_{n}|^{p+1}dx=\int_{\mathbb{R}^{N}}|u|^{p+1}dx+\int_{\mathbb{R}^{N}}|u_{n}-u|^{p+1}dx+o(1).$$
\end{lemma}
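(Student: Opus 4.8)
The plan is to carry out the classical truncation argument. Write $q=p+1$; since $0<p<\infty$ we have $q>1$, so $t\mapsto |t|^{q}$ is $C^{1}$ on $\mathbb{R}$. Put $f_{n}=u_{n}-u$, so that $f_{n}\to 0$ a.e., and introduce the defect
$$
D_{n}=|u_{n}|^{q}-|u_{n}-u|^{q}-|u|^{q}=|f_{n}+u|^{q}-|f_{n}|^{q}-|u|^{q}.
$$
The lemma asserts exactly $\int_{\mathbb{R}^{N}}D_{n}\,dx\to 0$, and I would in fact prove the stronger claim $\int_{\mathbb{R}^{N}}|D_{n}|\,dx\to 0$. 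First note that by Fatou's lemma $u\in L^{q}(\mathbb{R}^{N})$, since $u_{n}\to u$ a.e. and $\sup_{n}\|u_{n}\|_{L^{q}}<\infty$; consequently $\{f_{n}\}$ is bounded in $L^{q}$, say $\sup_{n}\int_{\mathbb{R}^{N}}|f_{n}|^{q}\,dx\le M$.

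The analytic heart of the argument is the elementary pointwise inequality: for every $\varepsilon>0$ there is a constant $C_{\varepsilon}>0$ such that for all $a,b\in\mathbb{R}$,
$$
\bigl||a+b|^{q}-|a|^{q}\bigr|\le \varepsilon\,|a|^{q}+C_{\varepsilon}\,|b|^{q}.
$$
I would establish this by the mean value theorem applied to $\phi(t)=|t|^{q}$, which gives $\bigl||a+b|^{q}-|a|^{q}\bigr|=q|\xi|^{q-1}|b|$ for some $\xi$ between $a$ and $a+b$, hence $\le q(|a|+|b|)^{q-1}|b|\le c_{q}\bigl(|a|^{q-1}|b|+|b|^{q}\bigr)$ after bounding $(|a|+|b|)^{q-1}$ by a constant times $|a|^{q-1}+|b|^{q-1}$. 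A final application of Young's inequality to the cross term, $|a|^{q-1}|b|\le \delta\,|a|^{q}+C(\delta)\,|b|^{q}$ with $\delta$ chosen small, produces the stated form with the coefficient of $|a|^{q}$ as small as we wish.

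Applying this with $a=f_{n}$ and $b=u$, together with the trivial estimate $|D_{n}|\le \bigl||f_{n}+u|^{q}-|f_{n}|^{q}\bigr|+|u|^{q}$, yields $|D_{n}|\le \varepsilon|f_{n}|^{q}+(C_{\varepsilon}+1)|u|^{q}$. Now define the truncated quantity
$$
g_{n}^{\varepsilon}=\bigl(|D_{n}|-\varepsilon|f_{n}|^{q}\bigr)^{+}.
$$
By construction $0\le g_{n}^{\varepsilon}\le (C_{\varepsilon}+1)|u|^{q}\in L^{1}(\mathbb{R}^{N})$, and since $f_{n}\to 0$ a.e. one has $D_{n}\to 0$ a.e., hence $g_{n}^{\varepsilon}\to 0$ a.e. The dominated convergence theorem then gives $\int_{\mathbb{R}^{N}}g_{n}^{\varepsilon}\,dx\to 0$. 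Finally, from $|D_{n}|\le g_{n}^{\varepsilon}+\varepsilon|f_{n}|^{q}$ and the uniform bound $\int|f_{n}|^{q}\le M$ we obtain $\limsup_{n}\int_{\mathbb{R}^{N}}|D_{n}|\,dx\le \varepsilon M$, and letting $\varepsilon\to 0$ finishes the proof.

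I expect the main obstacle to be conceptual rather than computational: the naive approach of passing to the limit directly in $\int D_{n}$ fails because $|f_{n}|^{q}$ is \emph{not} dominated by a fixed integrable function, so dominated convergence cannot be invoked on $|D_{n}|$ itself. The truncation device $g_{n}^{\varepsilon}$ is precisely what removes this difficulty, peeling off the undominated tail $\varepsilon|f_{n}|^{q}$, which is subsequently absorbed by the uniform $L^{q}$ bound and annihilated as $\varepsilon\to 0$. The accompanying technical point requiring care is proving the pointwise inequality with an \emph{arbitrarily small} coefficient on $|a|^{q}$ (not merely a fixed constant), since it is this smallness that forces the final $\limsup$ to vanish.
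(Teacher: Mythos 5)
Your proof is correct and complete: the paper does not prove this lemma at all (it simply cites it from the Brezis--Lieb reference), and your truncation argument --- the pointwise inequality $\bigl||a+b|^{q}-|a|^{q}\bigr|\le \varepsilon|a|^{q}+C_{\varepsilon}|b|^{q}$ obtained from the mean value theorem and Young's inequality, followed by dominated convergence applied to $g_{n}^{\varepsilon}=(|D_{n}|-\varepsilon|f_{n}|^{q})^{+}$ --- is precisely the classical proof in that cited source, valid here since $q=p+1>1$ makes $|t|^{q}$ a $C^{1}$ function. Nothing to correct.
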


Next, we borrow some ideas form \cite{CZ6}. Define
\begin{align}\label{int40}
f_{\beta,\gamma}(u):=\frac{1}{2}||u||^{2}_{D_{r}^{s}(\mathbb{R}^{N})}+\frac{1}{2}\int_{\mathbb{R}^{N}}\beta |u|^{2}dx-\frac{\gamma}{p+1}\int_{\mathbb{R}^{N}}|u|^{p+1}dx,
\end{align}
\begin{align}\label{int41}
g(v)=\frac{1}{2}||v||^{2}_{D_{r}^{s}(\mathbb{R}^{N})}-\frac{1}{2^{\ast}}\int_{\mathbb{R}^{N}}|v|^{2^{\ast}}dx,
\end{align}
and denote $f_{\beta}=f_{\beta,1}$. Then we have following Lemma.
\begin{lemma}\label{Lem6}
For any $(u,v)\in\mathcal{H}_{r}$ with $u\neq0$ and $v\neq0$, there holds
$$\max \limits_{t>0}E_{\mu,\nu,\lambda}(tu,tv)>\min\{\max \limits_{t>0}f_{\mu-\frac{\lambda^{2}}{\nu}}(tu),\  \max \limits_{t>0}g(tv)\}.$$
\end{lemma}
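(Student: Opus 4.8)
The plan is to decouple the cross term $-\lambda\int uv\,dx$ by completing the square and then to reduce everything to an elementary one–variable comparison. First I would record the pointwise algebraic identity
\[
\mu u^{2}+\nu v^{2}-2\lambda uv=\Big(\mu-\tfrac{\lambda^{2}}{\nu}\Big)u^{2}+\nu\Big(v-\tfrac{\lambda}{\nu}u\Big)^{2}.
\]
Since $\lambda<\sqrt{\mu\nu}$ forces $\mu-\frac{\lambda^{2}}{\nu}>0$ while the last square is nonnegative, discarding it and substituting into the fiber map $t\mapsto E_{\mu,\nu,\lambda}(tu,tv)$ (whose $D^{s}_r$-parts split as $\frac{t^{2}}{2}\|u\|^{2}_{D^{s}_{r}(\mathbb{R}^{N})}+\frac{t^{2}}{2}\|v\|^{2}_{D^{s}_{r}(\mathbb{R}^{N})}$) gives the pointwise lower bound
\[
E_{\mu,\nu,\lambda}(tu,tv)\ \ge\ f_{\mu-\frac{\lambda^{2}}{\nu}}(tu)+g(tv)=:h(t),\qquad t>0,
\]
the discarded remainder being exactly $\frac{t^{2}}{2}\nu\int_{\mathbb{R}^N}(v-\tfrac{\lambda}{\nu}u)^{2}dx\ge0$.

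Next I would treat the two summands as scalar functions $\phi(t):=f_{\mu-\lambda^{2}/\nu}(tu)$ and $\psi(t):=g(tv)$. Each has the shape $\frac{t^{2}}{2}a-\frac{t^{q}}{q}b$ with $a,b>0$ (the relevant integrals are positive because $u,v\not\equiv0$ and $\mu-\frac{\lambda^{2}}{\nu}>0$), hence each is strictly positive and strictly increasing on an interval $(0,t_{i})$, attains a unique positive maximum $M_{1}:=\phi(t_{1})$, $M_{2}:=\psi(t_{2})$ at its critical point, and decreases afterward. Writing $\bar t$ for the unique maximizer of $h$, the pointwise bound yields
\[
\max_{t>0}E_{\mu,\nu,\lambda}(tu,tv)\ \ge\ E_{\mu,\nu,\lambda}(\bar t u,\bar t v)\ \ge\ h(\bar t)=\max_{t>0}h(t),
\]
so it suffices to establish the one–variable inequality $\max_{t>0}h(t)>\min\{M_{1},M_{2}\}$.

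For this last step I would argue by cases on the order of $t_{1}$ and $t_{2}$. If $t_{1}\le t_{2}$, then since $\psi$ is strictly increasing and strictly positive on $(0,t_{2}]$ we have $\psi(t_{1})>0$, whence $\max_{t}h(t)\ge h(t_{1})=M_{1}+\psi(t_{1})>M_{1}\ge\min\{M_{1},M_{2}\}$. The case $t_{2}<t_{1}$ is symmetric: there $\phi(t_{2})>0$ and evaluating $h$ at $t_{2}$ gives $\max_{t}h(t)\ge h(t_{2})=\phi(t_{2})+M_{2}>M_{2}\ge\min\{M_{1},M_{2}\}$. Chaining this with the previous display proves the lemma.

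The main obstacle is precisely this final strict comparison: because the maximum of a sum is not the sum of the maxima, one cannot simply add $M_{1}$ and $M_{2}$, and the inequality could a priori fail if at the smaller maximizer the other functional were already negative. The key observation that rescues the argument is that at $\min\{t_{1},t_{2}\}$ the other single–variable functional is still \emph{strictly} positive, which is exactly where the monotonicity of the fiber maps on their increasing intervals and the hypothesis $u,v\not\equiv0$ enter. The decoupling identity in the first step is routine but essential, and it should be stressed that it produces a genuine inequality rather than an equality, since the discarded square $\nu(v-\frac{\lambda}{\nu}u)^{2}$ need not vanish.
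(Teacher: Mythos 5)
Your proposal is correct and follows essentially the same route as the paper: both decouple the cross term via $2\lambda uv\le \frac{\lambda^{2}}{\nu}u^{2}+\nu v^{2}$ (your completed square is just this inequality made explicit) to get $E_{\mu,\nu,\lambda}(tu,tv)\ge f_{\mu-\frac{\lambda^{2}}{\nu}}(tu)+g(tv)$, and then exploit the single-bump shape of the two fiber maps by evaluating at the smaller of the two maximizers, where the other summand is still strictly positive. The only cosmetic difference is that you pass through the maximizer of the sum $h$ before the case split, whereas the paper evaluates $E$ directly at $t_{1}$ or $t_{2}$; the argument is the same.
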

\begin{proof}
Since $2\lambda uv\leq\frac{\lambda^{2}}{\nu}u^{2}+\nu v^{2}$, we have $$E_{\mu,\nu,\lambda}(tu,tv)\geq f_{\mu-\frac{\lambda^{2}}{\nu}}(tu)+g(tv).$$
Moreover, there exist $t_{1},t_{2}>0$ such that $$\max \limits_{t>0}f_{\mu-\frac{\lambda^{2}}{\nu}}(tu)=f_{\mu-\frac{\lambda^{2}}{\nu}}(t_{1}u),\ \ \max \limits_{t>0}g(tv)=g(t_{2}v).$$
Since $f_{\mu-\frac{\lambda^{2}}{\nu}}(0)=0,\  g(0)=0$ and $f_{\mu-\frac{\lambda^{2}}{\nu}}(tu)$ is increasing in $[0,t_{1}]$, decreasing in $[t_{1},+\infty)$, $g(tv)$ is increasing in $[0,t_{2}]$, decreasing in $[t_{2},+\infty)$. Thus, if $t_{1}<t_{2}$, then $g(t_{1}v)>0$ and $E_{\mu,\nu,\lambda}(t_{1}u,t_{1}v)>f_{\mu-\frac{\lambda^{2}}{\nu}}(t_{1}u)=\max \limits_{t>0}f_{\mu-\frac{\lambda^{2}}{\nu}}(tu).$ If $t_{1}\geq t_{2}$, then $f_{\mu-\frac{\lambda^{2}}{\nu}}(t_{2}u)>0$ and $E_{\mu,\nu,\lambda}(t_{2}u,t_{2}v)>g(t_{2}v)=\max \limits_{t>0}g(tv).$ This completes the proof.
\end{proof}

By \cite{RLFL,FLS}, there exists a unique positive radial ground state solution $w$ to the equation $(-\Delta)^{s}u+u=u^{p}$ in $ H^{s}(\mathbb{R}^N)$. By \eqref{intabc},

$$
f_{1}(w)=\left(\frac{1}{2}-\frac{1}{p+1}\right)C^{\frac{p+1}{p-1}}_{p+1},
$$ where $f_{1}$ is defined in \eqref{int40}. Let $w_{\beta,\gamma}(x):=\beta^{\frac{1}{p-1}}\gamma^{-\frac{1}{p-1}}w(\beta^{\frac{1}{2s}}x)$, then $w_{\beta,\gamma}(x)$ is the unique positive radial solution of $(-\Delta)^{s}u+\beta u=\gamma u^{p},\ u\in H^{s}(\mathbb{R}^{N})$ with the energy
\begin{align}\label{int46}
f_{\beta,\gamma}(w_{\beta,\gamma})=\gamma^{-\frac{2}{p-1}}\beta^{\frac{p+1}{p-1}-\frac{N}{2s}}f_{1}(w)=\left(\frac{1}{2}-\frac{1}{p+1}\right)\gamma^{-\frac{2}{p-1}}\beta^{\frac{p+1}{p-1}-\frac{N}{2s}}C^{\frac{p+1}{p-1}}_{p+1}.
\end{align}
For convenience, we denote $w_{\beta}=w_{\beta,1}$. Define $\alpha=N(\frac{1}{p+1}-\frac{1}{2^{\ast}})\in(0,s),$ then $$\frac{s}{p+1}=\frac{\alpha}{2}+\frac{s-\alpha}{2^{\ast}}.$$
Let $\bar\mu_0=\frac{\alpha}{s}(\frac{s-\alpha}{s})^{\frac{N-2s}{2s}(\frac{p+1}{p-1}-\frac{N}{2s})^{-1}}$,  we have the following Lemma:
\begin{lemma}\label{int50}
There exists a constant $\mu_0>0$ with $0<\mu_{0}<\overline{\mu}_{0}$ such that
\begin{equation}\label{int45}
f_{\mu}(w_{\mu})=\left\{\begin{array}{ll}
                >\frac{s}{N}S^{\frac{N}{2s}}_{s},& \text{if} \ \mu>\mu_{0},\\
                =\frac{s}{N}S^{\frac{N}{2s}}_{s},& \text{if} \ \mu=\mu_{0},\\
                <\frac{s}{N}S^{\frac{N}{2s}}_{s},& \text{if} \ \mu<\mu_{0}.
           \end{array}\right.
\end{equation}
\end{lemma}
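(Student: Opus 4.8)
The plan is to reduce the trichotomy to a monotonicity statement and to recognize $\overline{\mu}_0$ as the exact threshold produced by an interpolation inequality.

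First I would use \eqref{int46} with $\gamma=1,\ \beta=\mu$ to write
\[
f_{\mu}(w_{\mu})=\Big(\tfrac12-\tfrac1{p+1}\Big)\mu^{\sigma}C_{p+1}^{\frac{p+1}{p-1}},\qquad \sigma:=\frac{p+1}{p-1}-\frac{N}{2s}.
\]
A direct check shows $\frac{p+1}{p-1}=\frac{N}{2s}$ precisely when $p=2^{\ast}-1$; since $p\mapsto\frac{p+1}{p-1}$ is strictly decreasing on $(1,\infty)$, the hypothesis $1<p<2^{\ast}-1$ forces $\sigma>0$. Hence $\mu\mapsto f_{\mu}(w_{\mu})$ is continuous and strictly increasing from $0$ to $+\infty$, so there is a unique $\mu_0>0$ with $f_{\mu_0}(w_{\mu_0})=\frac{s}{N}S_s^{N/2s}$. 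Solving this identity for $\mu_0$ reproduces exactly \eqref{int43}, and the three cases in \eqref{int45} follow immediately from the monotonicity.

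It remains to show $0<\mu_0<\overline{\mu}_0$. By the monotonicity above this is equivalent to $f_{\overline{\mu}_0}(w_{\overline{\mu}_0})>\frac{s}{N}S_s^{N/2s}$, so I would estimate $f_{\overline{\mu}_0}(w_{\overline{\mu}_0})$ from below. Writing $a=\|u\|_{L^2(\mathbb{R}^N)}^2$ and $b=\|u\|_{D^s(\mathbb{R}^N)}^2$, one has $\max_{t>0}f_{\overline{\mu}_0}(tu)=(\tfrac12-\tfrac1{p+1})(b+\overline{\mu}_0a)^{\frac{p+1}{p-1}}\big(\int_{\mathbb{R}^N}|u|^{p+1}\big)^{-\frac{2}{p-1}}$, and $f_{\overline{\mu}_0}(w_{\overline{\mu}_0})$ is the infimum of this ratio, attained at $u=w_{\overline{\mu}_0}$. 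The relation $\frac{s}{p+1}=\frac{\alpha}{2}+\frac{s-\alpha}{2^{\ast}}$ yields the interpolation inequality $\|u\|_{p+1}\le\|u\|_{L^2}^{\alpha/s}\|u\|_{L^{2^{\ast}}}^{(s-\alpha)/s}$, which together with $\|u\|_{L^{2^{\ast}}}^2\le S_s^{-1}b$ and the identities $\frac{p+1}{p-1}\frac{\alpha}{s}=\sigma$, $\frac{p+1}{p-1}\frac{s-\alpha}{s}=\frac{N}{2s}$ gives
\[
\Big(\int_{\mathbb{R}^N}|u|^{p+1}\Big)^{\frac{2}{p-1}}\le S_s^{-\frac{N}{2s}}\,a^{\sigma}\,b^{\frac{N}{2s}}.
\]

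Consequently $\max_{t>0}f_{\overline{\mu}_0}(tu)\ge(\tfrac12-\tfrac1{p+1})S_s^{N/2s}(b+\overline{\mu}_0a)^{\frac{p+1}{p-1}}a^{-\sigma}b^{-\frac{N}{2s}}$, whose right-hand side is homogeneous of degree $0$ in $(a,b)$. Minimizing over $a,b>0$ (normalize $b=1$ and use one-variable calculus) I find the minimizer at $\overline{\mu}_0 a=\frac{\alpha}{s-\alpha}$, and the precise value of $\overline{\mu}_0$ is calibrated exactly so that this minimum equals $\frac{s}{N}S_s^{N/2s}$. Since $w_{\overline{\mu}_0}\not\equiv0$, the Hölder interpolation inequality is strict at $u=w_{\overline{\mu}_0}$ (the extremals of $H^s\hookrightarrow L^{p+1}$ and $D^s\hookrightarrow L^{2^{\ast}}$ cannot coincide), so the ratio defining $f_{\overline{\mu}_0}(w_{\overline{\mu}_0})$ strictly exceeds its interpolation lower bound, which is in turn $\ge\inf_{a,b>0}(\cdots)=\frac{s}{N}S_s^{N/2s}$. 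Thus $f_{\overline{\mu}_0}(w_{\overline{\mu}_0})>\frac{s}{N}S_s^{N/2s}$ and $\mu_0<\overline{\mu}_0$. The monotonicity and the explicit algebra identifying $\mu_0$ are routine; the main obstacle is this last step — selecting the right interpolation inequality, carrying out the constrained optimization, and, most delicately, justifying the strictness that separates $\mu_0$ from $\overline{\mu}_0$.
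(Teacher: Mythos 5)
Your proposal is correct and follows essentially the same route as the paper: the trichotomy comes from the scaling identity \eqref{int46} together with $\frac{p+1}{p-1}-\frac{N}{2s}>0$, and the bound $\mu_{0}<\overline{\mu}_{0}$ comes from interpolating the $L^{p+1}$ norm between $L^{2}$ and $L^{2^{\ast}}$ and invoking $S_{s}$ --- your zero-homogeneous minimization over $(a,b)$ is exactly the paper's Young inequality with the calibrated $\epsilon_{0}$, and your minimizer $\overline{\mu}_{0}a=\frac{\alpha}{s-\alpha}$ and the resulting value $\frac{s}{N}S_{s}^{\frac{N}{2s}}$ check out. The only point to tighten is the strictness step: the correct reason H\"{o}lder is strict at $w_{\overline{\mu}_{0}}$ is that $|w_{\overline{\mu}_{0}}|^{2}$ and $|w_{\overline{\mu}_{0}}|^{2^{\ast}}$ are not proportional (a positive decaying ground state is not a.e.\ constant on its support), rather than your parenthetical about non-coinciding extremals; the paper instead gets strictness from $\|u\|^{2}_{D^{s}(\mathbb{R}^{N})}>S_{s}\bigl(\int_{\mathbb{R}^{N}}|u|^{2^{\ast}}dx\bigr)^{2/2^{\ast}}$ evaluated where $C_{p+1}$ is attained.
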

\begin{proof}
By \eqref{int43} and \eqref{int46}, we have $f_{\mu_{0}}(w_{\mu_{0}})=\frac{s}{N}S^{\frac{N}{2s}}_{s}.$ Since $p<2^{\ast}-1$, we have $$\frac{p+1}{p-1}-\frac{N}{2s}>0.$$  From \eqref{int46} it is easy to obtain \eqref{int45}.
In order to prove $\overline{\mu}_{0}>\mu_{0}$, by \eqref{int45} we need to show $f_{\overline{\mu}_{0}}(w_{\overline{\mu}_{0}})>\frac{s}{N}S^{\frac{N}{2s}}_{s}.$ By H\"{o}lder inequality and Young inequality, we have
\begin{align*}
\left(\int_{\mathbb{R}^{N}}|u|^{p+1}dx\right)^{\frac{2}{p+1}}&\leq\left(\int_{\mathbb{R}^{N}}|u|^{2}dx\right)^{\frac{\alpha}{s}}\left(\int_{\mathbb{R}^{N}}|u|^{2^{\ast}}dx\right)^{\frac{2(s-\alpha)}{2^{\ast}s}}\\
&\leq\frac{\alpha}{s}\epsilon^{\frac{s}{\alpha}}\int_{\mathbb{R}^{N}}|u|^{2}dx+\frac{s-\alpha}{s}\epsilon^{-\frac{s}{s-\alpha}}\left(\int_{\mathbb{R}^{N}}|u|^{2^{\ast}}dx\right)^{\frac{2}{2^{\ast}}}.
\end{align*}
If we choose $C_{0}>0,\ \ \epsilon_{0}>0$ such that $$C_{0}\frac{\alpha}{s}\epsilon_{0}^{\frac{s}{\alpha}}=1,\ \ C_{0}\frac{s-\alpha}{s}\epsilon_{0}^{-\frac{s}{s-\alpha}}=S_{s},$$ then, we have $$C_{0}=S^{\frac{s-\alpha}{s}}_{s}\left(\frac{s}{s-\alpha}\right)^{\frac{s-\alpha}{s}}\left(\frac{s}{\alpha}\right)^{\frac{\alpha}{s}}.$$ and $$||u||^{2}_{H_{r}^{s}(\mathbb{R}^{N})}+\int_{\mathbb{R}^{N}}|u|^{2}dx>S_{s}\left(\int_{\mathbb{R}^{N}}|u|^{2^{\ast}}dx\right)^{\frac{2}{2^{\ast}}}+\int_{\mathbb{R}^{N}}|u|^{2}dx\geq C_{0}\left(\int_{\mathbb{R}^{N}}|u|^{p+1}dx\right)^{\frac{2}{p+1}}.$$
This implies that $C_{p+1}>C_{0}$. Combining this with \eqref{int46}, we obtain
\begin{align*}
f_{\mu}(w_{\mu})&>\left(\frac{1}{2}-\frac{1}{p+1}\right)\mu^{\frac{p+1}{p-1}-\frac{N}{2s}}\left[S^{\frac{s-\alpha}{s}}_{s}\left(\frac{s}{s-\alpha}\right)^{\frac{s-\alpha}{s}}\left(\frac{s}{\alpha}\right)^{\frac{\alpha}{s}}\right]^{\frac{p+1}{p-1}}\\
&=\frac{s}{N}S^{\frac{N}{2s}}_{s}\left(\frac{s}{s-\alpha}\right)^{\frac{N-2s}{2s}}\left(\frac{s}{\alpha}\right)^{\frac{p+1}{p-1}-\frac{N}{2s}}\mu^{\frac{p+1}{p-1}-\frac{N}{2s}}.
\end{align*}
If we choose $$\overline{\mu}_{0}=\frac{\alpha}{s}(\frac{s-\alpha}{s})^{\frac{N-2s}{2s}(\frac{p+1}{p-1}-\frac{N}{2s})^{-1}},$$
we have $f_{\overline{\mu}_{0}}(w_{\overline{\mu}_{0}})>\frac{s}{N}S^{\frac{N}{2s}}_{s}$. Thus, $\overline{\mu}_{0}>\mu_{0}.$ This completes the proof.
\end{proof}
\begin{remark}
From the proof of the above lemma, though the exact values of $C_{p+1}$ and $\mu_0$ are unknown, we have a lower bound estimate for $C_{p+1}$ and an upper bound estimate for $\mu_0$:
\begin{align*}
C_{p+1}>C_{0}=S^{\frac{s-\alpha}{s}}_{s}\left(\frac{s}{s-\alpha}\right)^{\frac{s-\alpha}{s}}\left(\frac{s}{\alpha}\right)^{\frac{\alpha}{s}},
\end{align*}

\begin{align*}
\mu_{0}<\overline{\mu}_{0}=\frac{\alpha}{s}(\frac{s-\alpha}{s})^{\frac{N-2s}{2s}(\frac{p+1}{p-1}-\frac{N}{2s})^{-1}}.
\end{align*}
Since $\alpha\in(0,s)$, it is easy to deduce that $\overline{\mu}_{0}<1$.
\end{remark}
For any $\mu>\mu_{0},\ \ \nu>0$, we define a $C^{1}$ function $h_{\mu,\nu}:(0,+\infty)\rightarrow \mathbb{R} $ by
\begin{align}\label{int47}
h_{\mu,\nu}(a)=\frac{\mu+\nu a^{2}}{2a}-\frac{\mu_{0}}{2a}\left(1+a^{2}\right)^{-\frac{N}{2s}(\frac{p+1}{p-1}-\frac{N}{2s})^{-1}}.
\end{align}
Then, $$h_{\mu,\nu}(a)>\frac{\mu-\mu_{0}+\nu a^{2}}{2a}\geq\sqrt{(\mu-\mu_{0})\nu}.$$
Thus, $h_{\mu,\nu}(a)\rightarrow+\infty$ as $a\rightarrow 0_{+}$ and $h_{\mu,\nu}(a)$ is increasing in $[\sqrt{\frac{\mu}{\nu}},+\infty).$
Therefore, there exists $a_{\mu,\nu}\in(0,\sqrt{\frac{\mu}{\nu}})$ such that
\begin{align}\label{int48}
\widetilde{\lambda}_{\mu,\nu}:=h_{\mu,\nu}(a_{\mu,\nu})=\min \limits_{a\in(0,+\infty)}h_{\mu,\nu}(a).
\end{align}
Since $h_{\mu,\nu}(\sqrt{\frac{\mu}{\nu}})<\sqrt{\mu\nu}$, we have
\begin{align}\label{int49}
\sqrt{(\mu-\mu_{0})\nu}<\widetilde{\lambda}_{\mu,\nu}<\sqrt{\mu\nu}.
\end{align}

\begin{lemma}\label{Lem7}
(1)If $0<\mu\leq\mu_{0},$ then $A_{\mu,\nu,\lambda}<\frac{s}{N}S^{\frac{N}{2s}}_{s}.$\\
(2)If $\mu>\mu_{0}$, then there exists a $\lambda_{\mu,\nu}\in[\sqrt{(\mu-\mu_{0})\nu},\widetilde{\lambda}_{\mu,\nu})$ such that

$(i)$ if $0<\lambda\leq\lambda_{\mu,\nu},$ then $A_{\mu,\nu,\lambda}=\frac{s}{N}S^{\frac{N}{2s}}_{s}$;

$(ii)$ if $\lambda_{\mu,\nu}<\lambda<\sqrt{\mu\nu},$ then $A_{\mu,\nu,\lambda}<\frac{s}{N}S^{\frac{N}{2s}}_{s},$\\
where $\widetilde{\lambda}_{\mu,\nu}$ is from \eqref{int48}.
\end{lemma}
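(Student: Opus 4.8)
The plan is to sandwich $A_{\mu,\nu,\lambda}$ between a lower bound coming from Lemma \ref{Lem6} and an upper bound coming from an explicit test family, and then to use the monotonicity of $A_{\mu,\nu,\lambda}$ in $\lambda$ to locate the threshold $\lambda_{\mu,\nu}$. Throughout I work with the characterization $A_{\mu,\nu,\lambda}=\overline{A}_{\mu,\nu,\lambda}=\inf_{(u,v)\neq(0,0)}\max_{t>0}E_{\mu,\nu,\lambda}(tu,tv)$ from Lemma \ref{Lem5}. I write $e=\frac{p+1}{p-1}-\frac{N}{2s}>0$ and $\kappa=\frac{N}{2s}\left(\frac{p+1}{p-1}-\frac{N}{2s}\right)^{-1}$, so that $\kappa$ is exactly the exponent appearing in \eqref{int47}.

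\textbf{Lower bound.} For $u,v\neq 0$, Lemma \ref{Lem6} gives $\max_{t>0}E_{\mu,\nu,\lambda}(tu,tv)>\min\{\max_t f_{\mu-\frac{\lambda^{2}}{\nu}}(tu),\ \max_t g(tv)\}$, and since $w_{\mu-\frac{\lambda^2}{\nu}}$ is the subcritical ground state while $S_s$ is defined as an infimum, this is $\geq\min\{f_{\mu-\frac{\lambda^2}{\nu}}(w_{\mu-\frac{\lambda^2}{\nu}}),\ \frac{s}{N}S_s^{\frac{N}{2s}}\}$; pairs with $u=0$ or $v=0$ only give larger values. Hence $A_{\mu,\nu,\lambda}\geq\min\{f_{\mu-\frac{\lambda^2}{\nu}}(w_{\mu-\frac{\lambda^2}{\nu}}),\ \frac{s}{N}S_s^{\frac{N}{2s}}\}$, and by Lemma \ref{int50}, whenever $\mu-\frac{\lambda^2}{\nu}\geq\mu_0$, that is $\lambda\leq\sqrt{(\mu-\mu_0)\nu}$, we obtain $A_{\mu,\nu,\lambda}\geq\frac{s}{N}S_s^{\frac{N}{2s}}$.

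\textbf{Upper bound and the key test family.} Testing with $(0,\widetilde{u}_{\varepsilon})$ and letting $\varepsilon\to0$ (the mass term $\nu\int\widetilde u_\varepsilon^2$ is negligible against the scale-invariant Dirichlet and $L^{2^*}$ terms) yields $A_{\mu,\nu,\lambda}\leq\frac{s}{N}S_s^{\frac{N}{2s}}$ for all parameters; combined with the lower bound this already gives $A_{\mu,\nu,\lambda}=\frac{s}{N}S_s^{\frac{N}{2s}}$ when $\mu>\mu_0$ and $\lambda\leq\sqrt{(\mu-\mu_0)\nu}$. For the strict upper bound I test with $v=au$, $a>0$ free: dropping the negative critical term raises $E_{\mu,\nu,\lambda}(tu,tau)$ to $\widehat{E}_a(tu)$, where $\widehat E_a(u)=\frac{1+a^2}{2}\|u\|^2_{D_r^s(\mathbb{R}^N)}+\frac{m(a)}{2}\int u^2-\frac{1}{p+1}\int|u|^{p+1}$ with $m(a)=\mu+\nu a^2-2\lambda a\geq\mu-\frac{\lambda^2}{\nu}>0$. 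The scaling identity \eqref{int46} then gives the closed form $\inf_u\max_t\widehat E_a(tu)=\frac{s}{N}S_s^{\frac{N}{2s}}\left(\frac{m(a)}{\mu_0}\right)^{e}(1+a^2)^{\frac{N}{2s}}$, which is $<\frac{s}{N}S_s^{\frac{N}{2s}}$ precisely when $m(a)<\mu_0(1+a^2)^{-\kappa}$, i.e. precisely when $\lambda>h_{\mu,\nu}(a)$. Thus $A_{\mu,\nu,\lambda}<\frac{s}{N}S_s^{\frac{N}{2s}}$ as soon as $\lambda>\inf_{a>0}h_{\mu,\nu}(a)$.

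\textbf{Assembling the dichotomy.} For part $(1)$, $\mu\leq\mu_0$ forces the numerator of $h_{\mu,\nu}(a)$ to tend to $\mu-\mu_0\leq0$ as $a\to0^+$, so $\inf_a h_{\mu,\nu}\leq0$ and every $\lambda>0$ satisfies $\lambda>h_{\mu,\nu}(a)$ for small $a$, giving $A_{\mu,\nu,\lambda}<\frac{s}{N}S_s^{\frac{N}{2s}}$. For part $(2)$, $\mu>\mu_0$ gives $\inf_a h_{\mu,\nu}=\widetilde{\lambda}_{\mu,\nu}$ attained at $a_{\mu,\nu}$ by \eqref{int48}; evaluating the full $E_{\mu,\nu,\lambda}$ (now keeping the critical term, which is strictly negative at the maximizing $t>0$) at the effective ground state $u^{*}$ shows $A_{\mu,\nu,\lambda}\leq\max_t E_{\mu,\nu,\lambda}(tu^{*},ta_{\mu,\nu}u^{*})<\frac{s}{N}S_s^{\frac{N}{2s}}$ even at $\lambda=\widetilde{\lambda}_{\mu,\nu}$. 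Finally, restricting the infimum to $u,v\geq0$ (so $\int uv\geq0$) makes each $\max_t E_{\mu,\nu,\lambda}$ non-increasing in $\lambda$, hence $A_{\mu,\nu,\lambda}$ is non-increasing and continuous in $\lambda$; setting $\lambda_{\mu,\nu}=\sup\{\lambda:A_{\mu,\nu,\lambda}=\frac{s}{N}S_s^{\frac{N}{2s}}\}$, the lower bound forces $\lambda_{\mu,\nu}\geq\sqrt{(\mu-\mu_0)\nu}$ while the strict estimate at $\widetilde{\lambda}_{\mu,\nu}$ forces $\lambda_{\mu,\nu}<\widetilde{\lambda}_{\mu,\nu}$, and monotonicity yields $(i)$ and $(ii)$.

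\textbf{Main obstacle.} The crux is the closed-form evaluation of $\inf_u\max_t\widehat E_a(tu)$ via \eqref{int46} and its recognition as the inequality $\lambda>h_{\mu,\nu}(a)$; this is what turns the soft comparison into the sharp threshold $\widetilde{\lambda}_{\mu,\nu}$. The remaining delicate points are the negligibility of the mass term under concentration of $\widetilde u_\varepsilon$ for every $N>2s$ (whose rate differs according to $N\gtrless 4s$), the strictness recovered by re-inserting the critical term exactly at $\lambda=\widetilde{\lambda}_{\mu,\nu}$, and the monotonicity/continuity of $A_{\mu,\nu,\lambda}$ needed to place $\lambda_{\mu,\nu}$ in the half-open interval $[\sqrt{(\mu-\mu_0)\nu},\widetilde{\lambda}_{\mu,\nu})$.
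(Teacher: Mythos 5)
Your proposal is correct and follows the same overall architecture as the paper: the lower bound via Lemma \ref{Lem6} combined with Lemma \ref{int50}, the upper bound $A_{\mu,\nu,\lambda}\le\frac{s}{N}S_s^{\frac{N}{2s}}$ by concentration along $(0,v)$, the coupled test family $(tw_{\beta,\gamma},\,t a w_{\beta,\gamma})$ whose closed-form energy $(1+a^2)^{\frac{N}{2s}}\bigl(\mu+\nu a^2-2\lambda a\bigr)^{\frac{p+1}{p-1}-\frac{N}{2s}}f_{1}(w)$ reduces the strict inequality to $\lambda>h_{\mu,\nu}(a)$, and the monotone threshold $\lambda_{\mu,\nu}$. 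Two local remarks. First, your treatment of the boundary case $\mu=\mu_{0}$ in part (1) genuinely differs from the paper's: you observe $h_{\mu_{0},\nu}(a)\to 0^{+}$ as $a\to 0^{+}$, so every $\lambda>0$ exceeds $h_{\mu_0,\nu}(a)$ for $a$ small, and the same coupled test family settles this case; the paper instead argues by contradiction that $A_{\mu_{0},\nu,\lambda}=\frac{s}{N}S_s^{\frac{N}{2s}}$ would make $(w_{\mu_{0}},0)$ a ground state, whence $\lambda w_{\mu_{0}}=0$ from the second equation. Your version is cleaner and avoids invoking that a Nehari minimizer is a critical point. Second, the one step you assert rather than prove --- continuity of $\lambda\mapsto A_{\mu,\nu,\lambda}$ --- is exactly where the paper supplies the argument you still need: only left-continuity at $\lambda_{\mu,\nu}$ is required to see that your supremum is attained (equivalently $A_{\mu,\nu,\lambda_{\mu,\nu}}=\frac{s}{N}S_s^{\frac{N}{2s}}$, which is what forces $\lambda_{\mu,\nu}<\widetilde{\lambda}_{\mu,\nu}$ strictly), and it follows from monotonicity together with upper semicontinuity of the infimum of the continuous maps $\lambda\mapsto\max_{t>0}E_{\mu,\nu,\lambda}(tu,tv)$ --- this is the paper's $t_{n}\to t_{0}$, $\limsup_{n}A_{n}\le A_{\mu,\nu,\lambda_{\mu,\nu}}$ computation. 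Finally, as you yourself flag, for $N\le 4s$ the untruncated $\widetilde u_{\varepsilon}$ fails to lie in $L^{2}(\mathbb{R}^{N})$, so the upper bound must be run with the truncated $v_{\varepsilon}=\eta V_{\varepsilon}$ and the three-case $L^{2}$ estimates quoted in the paper.
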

\begin{proof}
(1) If $\mu\in(0,\mu_{0}),$ by Lemma \ref{int50}, we have $$\max \limits_{t>0}E_{\mu,\nu,\lambda}(tw_{\mu},0)=\max \limits_{t>0}f_{\mu}(tw_{\mu})=f_{\mu}(w_{\mu})<\frac{s}{N}S^{\frac{N}{2s}}_{s}.$$
Thus $A_{\mu,\nu,\lambda}<\frac{s}{N}S^{\frac{N}{2s}}_{s}.$

When $\mu=\mu_{0},$ then $A_{\mu_{0},\nu,\lambda}\leq f_{\mu_{0}}(w_{\mu_{0}})=\frac{s}{N}S^{\frac{N}{2s}}_{s}.$ Assume by contradiction that $A_{\mu_{0},\nu,\lambda}=\frac{s}{N}S^{\frac{N}{2s}}_{s}$, then
$$
E_{\mu_{0},\nu,\lambda}(w_{\mu_{0}},0)=A_{\mu_{0},\nu,\lambda},\ \ (w_{\mu_{0}},0)\in\mathbb{M}_r.
$$
Thus, $(w_{\mu_{0}},0)$ is a ground state solution of \eqref{int1}. Since $\lambda>0$, if $(w_{\mu_{0}},0)$ is a ground state solution of \eqref{int1}, then $w_{\mu_{0}}\equiv0.$ This contract with $w_{\mu_{0}}\neq0,$ so $A_{\mu_{0},\nu,\lambda}<\frac{s}{N}S^{\frac{N}{2s}}_{s}.$

(2) In order to prove second part of Lemma \ref{Lem7}, we divide into two steps.\\
{\bf Step 1} We first show that for any fixed $\mu>\mu_{0},\ \nu>0,$ if $0<\lambda\leq\sqrt{(\mu-\mu_{0})\nu}$, then
\begin{align}\label{int51}
A_{\mu,\nu,\lambda}=\frac{s}{N}S^{\frac{N}{2s}}_{s}.
\end{align}
First, we show that the single equation
\begin{equation}\label{intaa}
(-\Delta)^{s}v+\nu v= |v|^{2^{\ast}-2}v,\ \  v\in H^{s}(\mathbb{R}^{N}),
\end{equation}
 has no nontrivial solutions.

In deed, by the similar arguments as Lemma \ref{Lem1} and Lemma \ref{Lem5}, we know that the energy functional  $I(v)$  for equation \eqref{intaa}  has mountain pass structure and bounded $(PS)_{c}$ sequence. It is well know that if the least energy for $I(v)$ strictly less than $\frac{s}{N}S^{\frac{N}{2s}}_{s}$, then \eqref{intaa} has nontrivial solutions.

Let us now consider the cut-off function $\eta(x)\in C^{\infty}_{0}(\mathbb{R}^{N},[0,1])$ such that $0\leq\eta\leq1,\eta=1\ \text{on}\ B(0,r)$ and $\eta=1\text{on}\ \mathbb{R}^{N}\setminus B(0,2r).$ For every $\epsilon>0,$ we let $v_{\epsilon}=\eta(x)V_{\epsilon}(x)$, where $V_{\epsilon}(x)=\epsilon^{-\frac{N-2s}{2}}V(\frac{x}{\epsilon})$ , $S_{s} $ is attained by $V(x)$. Then, the following estimates holds true(Proposition 21 in \cite{SVa})
 $$\int_{\mathbb{R}^{2N}}\frac{|v_{\epsilon}(x)-v_{\epsilon}(y)|^{2}}{|x-y|^{N+2s}}dxdy\leq S^{\frac{N}{2s}}_{s}+o(\epsilon^{N-2s}),$$
 $$ \int_{\mathbb{R}^{N}}|v_{\epsilon}|^{^{2}}dx=\left\{
\begin{array}{rcl}
C\epsilon^{2s}+o(\epsilon^{N-2s})       &      & if\  N>4s\\
C\epsilon^{2s}\log(\frac{1}{\epsilon})+o(\epsilon^{2s})    &      & if\  N=4s\\
 C\epsilon^{N-2s}+o(\epsilon^{2s})    &      & if\  N<4s
\end{array} \right. $$
 $$\int_{\mathbb{R}^{N}}|v_{\epsilon}|^{^{2^{\ast}}}dx=S^{\frac{N}{2s}}_{s}+o(\epsilon^{N}).$$
 Then, using the above estimates and by the similar arguments as \cite{SVa}, we can show that the least energy for equation \eqref{intaa} can not strictly less than $\frac{s}{N}S^{\frac{N}{2s}}_{s}$. So, \eqref{intaa} has no nontrivial solutions.

 Thus, it is easily seen that $S_{s}$ is also the sharp constant of $$\|v\|^{2}_{D_{r}^{s}(\mathbb{R}^{N})}+\int_{\mathbb{R}^{N}}\nu|v|^{2}dx\geq S_{s}(\int_{\mathbb{R}^{N}}|v|^{2^{\ast}}dx)^{\frac{2}{2^{\ast}}},$$
which implies that
\begin{align}\label{int53}
A_{\mu,\nu,\lambda}\leq\inf \limits_{H_{r}^{s}(\mathbb{R}^{N})\setminus\{0\}}\max \limits_{t>0}E_{\mu,\nu,\lambda}(0,tv)=\frac{s}{N}S^{\frac{N}{2s}}_{s}.
\end{align}
On the other hand, assume $0<\lambda\leq\sqrt{(\mu-\mu_{0})\nu},$ then $\mu-\frac{\lambda^{2}}{\nu}\geq\mu_{0}$. By the same arguments as Lemma \ref{Lem5}, we have $$f_{\mu}(w_{\mu})=\inf \limits_{H_{r}^{s}(\mathbb{R}^{N})\setminus\{0\}}\max \limits_{t>0}f_{\mu}(tu).$$
By \eqref{int2}, we have $$\inf \limits_{H_{r}^{s}(\mathbb{R}^{N})\setminus\{0\}}\max \limits_{t>0}g(tu)=\frac{s}{N}S^{\frac{N}{2s}}_{s}.$$
For any $(u,v)\in \mathcal{H}_{r}\setminus\{(0,0)\},$ if $v=0,$ then $\max \limits_{t>0}E_{\mu,\nu,\lambda}(tu,0)=\max \limits_{t>0}f_{\mu}(tu)\geq\frac{s}{N}S^{\frac{N}{2s}}_{s}.$ If $u=0,$ then $\max \limits_{t>0}E_{\mu,\nu,\lambda}(0,tv)=\max \limits_{t>0}g(tv)\geq\frac{s}{N}S^{\frac{N}{2s}}_{s}.$ If $u\neq0$ and $v\neq0$, then by Lemma \ref{Lem6} and Lemma \ref{int50}, we have
\begin{align*}
\max \limits_{t>0}E_{\mu,\nu,\lambda}(tu,tv)>\min\{\max \limits_{t>0}f_{\mu-\frac{\lambda^{2}}{\nu}}(tu),\  \max \limits_{t>0}g(tv)\}\geq\frac{s}{N}S^{\frac{N}{2s}}_{s}.
\end{align*}
Thus,
\begin{align}\label{int54}
A_{\mu,\nu,\lambda}\geq\frac{s}{N}S^{\frac{N}{2s}}_{s}.
\end{align}

Combining \eqref{int53} with \eqref{int54}, we obtain $A_{\mu,\nu,\lambda}=\frac{s}{N}S^{\frac{N}{2s}}_{s}.$

{\bf Step 2}
We prove $(i)-(ii)$ in $(2)$.

Let $0<\lambda<\sqrt{\mu\nu}$, we define $$\beta:=\frac{\mu+\nu a_{\mu,\nu}^{2}-2\lambda a_{\mu,\nu}^{2}}{1+a_{\mu,\nu}^{2}},\ \gamma:=\frac{1}{1+a_{\mu,\nu}^{2}},$$
where $a_{\mu,\nu}$ is from \eqref{int48}.

Then,
\begin{align*}
A_{\mu,\nu,\lambda}&\leq\max \limits_{t>0}E_{\mu,\nu,\lambda}(tw_{\beta,\gamma},t(a_{\mu,\nu}w_{\beta,\gamma}))\\
&<(1+a_{\mu,\nu}^{2})\max \limits_{t>0}f_{\beta,\gamma}(tw_{\beta,\gamma})=(1+a_{\mu,\nu}^{2})f_{\beta,\gamma}(w_{\beta,\gamma})\\
&=(1+a_{\mu,\nu}^{2})^{\frac{N}{2s}}\left(\mu+\nu a_{\mu,\nu}^{2}-2\lambda a_{\mu,\nu}\right)^{\frac{p+1}{p-1}-\frac{N}{2s}}\left(\frac{1}{2}-\frac{1}{p+1}\right)C^{\frac{p+1}{p-1}}_{p+1}=:A_{0}.
\end{align*}
By Lemma \ref{int50} we have that $A_{0}\leq\frac{s}{N}S^{\frac{N}{2s}}_{s}$ is equivalent to $$(1+a_{\mu,\nu}^{2})^{\frac{N}{2s}}\left(\mu+\nu a_{\mu,\nu}^{2}-2\lambda a_{\mu,\nu}\right)^{\frac{p+1}{p-1}-\frac{N}{2s}}\leq\mu^{\frac{p+1}{p-1}-\frac{N}{2s}}_{0}.$$
By \eqref{int47} and \eqref{int48}, we can deduce that the above inequality is equivalent to $\lambda>\widetilde{\lambda}_{\mu,\nu}.$ Combining this with \eqref{int49}, for any $\lambda\in[\widetilde{\lambda}_{\mu,\nu},\sqrt{\mu\nu}),$ we have $A_{\mu,\nu,\lambda}<\frac{s}{N}S^{\frac{N}{2s}}_{s}.$ Define $$\lambda_{\mu,\nu}:=\inf\{\lambda<\sqrt{\mu\nu}:A_{\mu,\nu,\tau}<\frac{s}{N}S^{\frac{N}{2s}}_{s}, \forall\ \tau\in[\lambda,\sqrt{\mu\nu})\}.$$ Then, by \eqref{int51}, we know $\lambda_{\mu,\nu}\in[\sqrt{(\mu-\mu_{0})\nu},\widetilde{\lambda}_{\mu,\nu}]$ and for any $\lambda\in(\lambda_{\mu,\nu},\sqrt{\mu\nu}),$ there holds $A_{\mu,\nu,\lambda}<\frac{s}{N}S^{\frac{N}{2s}}_{s}.$ This completes the proof of $(ii)$.

We show that $A_{\mu,\nu,\lambda_{\mu,\nu}}=\frac{s}{N}S^{\frac{N}{2s}}_{s},$ which implies $\lambda_{\mu,\nu}<\widetilde{\lambda}_{\mu,\nu}$ immediately.

By \eqref{int53}, we have $A_{\mu,\nu,\lambda_{\mu,\nu}}\leq\frac{s}{N}S^{\frac{N}{2s}}_{s}.$ By the definition of $\lambda_{\mu,\nu},$ there exists $\lambda_{n}<\lambda_{\mu,\nu},\ n\geq1$ such that $$\lim\limits_{n\rightarrow+\infty}\lambda_{n}=\lambda_{\mu,\nu}, \ \ A_{n}:=A_{\mu,\nu,\lambda_{n}}\geq\frac{s}{N}S^{\frac{N}{2s}}_{s},\ \ \forall\ n\geq1.$$
For any $(u,v)\in \mathcal{H}_{r}\setminus\{(0,0)\},$ there exist $t_{n}>0$ such that
$\max\limits_{t>0}E_{\mu,\nu,\lambda_{n}}(tu,tv)=E_{\mu,\nu,\lambda_{n}}(t_{n}u,t_{n}v).$ Since $\lambda_{n}\rightarrow\lambda_{\mu,\nu},$ we have $t_{n}\rightarrow t_{0}$ as $n\rightarrow+\infty,$ where $t_{0}$ satisfies $\max\limits_{t>0}E_{\mu,\nu,\lambda_{n}}(tu,tv)=E_{\mu,\nu,\lambda_{n}}(t_{0}u,t_{0}v).$ Then, $$\limsup\limits_{n\rightarrow+\infty}A_{n}\leq\limsup\limits_{n\rightarrow+\infty}E_{\mu,\nu,\lambda_{n}}(t_{n}u,t_{n}v)=E_{\mu,\nu,\lambda_{\mu,\nu}}(t_{0}u,t_{0}v).$$
This implies $$\frac{s}{N}S^{\frac{N}{2s}}_{s}\leq\limsup\limits_{n\rightarrow+\infty}A_{n}\leq A_{\mu,\nu,\lambda_{\mu,\nu}}.$$
Thus, $A_{\mu,\nu,\lambda_{\mu,\nu}}=\frac{s}{N}S^{\frac{N}{2s}}_{s}.$

Next, we claim $A_{\mu,\nu,\lambda}$ is non-increasing with respect to $\lambda>0.$

Since
\begin{align}\label{int57}
\max\limits_{t>0}E_{\mu,\nu,\lambda}(tu,tv)\geq\max\limits_{t>0}E_{\mu,\nu,\lambda}(t|u|,t|v|),
\end{align}
then we have
\begin{align}\label{int58}
A_{\mu,\nu,\lambda}=\inf \limits_{\mathcal{H}_{r}\setminus\{(0,0)\}}\max \limits_{t>0}E_{\mu,\nu,\lambda}(t|u|,t|v|).
\end{align}
 Let $\lambda_{1}<\lambda_{2}.$ Then for any $(u,v)\in \mathcal{H}_{r}\setminus\{(0,0)\}$ and $t>0$ we have $$E_{\mu,\nu,\lambda_{1}}(t|u|,t|v|)\geq E_{\mu,\nu,\lambda_{2}}(t|u|,t|v|).$$
 Thus, $A_{\mu,\nu,\lambda_{1}}\geq A_{\mu,\nu,\lambda_{2}}.$ Consequently, $A_{\mu,\nu,\lambda}$ is non-increasing with respect to $\lambda>0.$

 Combining this fact with \eqref{int51}, we see that $(i)$ holds. This completes the proof.
\end{proof}
\section{Proof of Theorem 1.1}\label{sec1}
\begin{proof}[Proof of Theorem \ref{Th1}]
By principle of symmetric criticality (Theorem 1.28 in \cite{MWM}), the solutions for \eqref{int1} in function space ${H_{r}^{s}(\mathbb{R}^{N})}\times {H_{r}^{s}(\mathbb{R}^{N})}$ are also the solutions in function space ${H^{s}(\mathbb{R}^{N})}\times {H^{s}(\mathbb{R}^{N})}$.

We prove Theorem \ref{Th1} by two steps. First, we prove the existence of ground state solutions for system \eqref{int1} in step 1, then we claim there exist a positive ground state solution.

{\bf Step 1}. Prove the existence of ground state solutions for system \eqref{int1}.\\
By \eqref{int4} and the proof of Lemma \ref{Lem5}, there exists a bounded sequence $\{(u_{n},v_{n})\}\subset \mathcal{H}_{r}$, such that
\begin{align*}
E_{\mu,\nu,\lambda}(u_{n},v_{n})\rightarrow \widehat{A}_{\mu,\nu,\lambda}\ \text{and} \ E'_{\mu,\nu,\lambda}(u_{n},v_{n})\rightarrow0 \ \text{as} \ n\rightarrow+\infty.
\end{align*}

Thus, by Sobolev Imbedding Theorem, there exist $(u,v)\in\mathcal{H}_{r}$ such that
 \begin{equation}
\begin{cases}

(u_{n},v_{n})\rightharpoonup(u,v), & \text{weakly in } \ \ \mathcal{H}_{r},\\
(u_{n},v_{n})\rightarrow(u,v),& \text{strongly in} \ \ L^{p}(\mathbb{R}^{N})\times L^{p}(\mathbb{R}^{N}), \text{for} \ 2< p<2^{\ast},\\

(u_{n},v_{n})\rightarrow(u,v), & \text{a.e.}\ \mathbb{R}^{N}.
\end{cases}
\end{equation}
If $(u,v)\not\equiv (0,0)$, then we have
\begin{equation}\label{aaa}
E'_{\mu,\nu,\lambda}(u,v)=0.
\end{equation}
Let $w_{n}=u_{n}-u$ and $\sigma_{n}=v_{n}-v$. We claim
\begin{align}\label{o}
E_{\mu,\nu,\lambda}(w_{n},\sigma_{n})&=E_{\mu,\nu,\lambda}(u_{n},v_{n})-E_{\mu,\nu,\lambda}(u,v)+o(1).
\end{align}
\begin{align}\label{oo}
E_{\mu,\nu,\lambda}'(w_{n},\sigma_{n})=o(1).
\end{align}
By Lemma \ref{k}, there holds
\begin{align}\label{ab}
\|u_{n}\|^{2}_{2}=\|u\|^{2}_{2}+\|w_{n}\|^{2}_{2}+o_{n}(1),
\end{align}
\begin{align}\label{aa}
\|v_{n}\|^{2}_{2}=\|v\|^{2}_{2}+\|\sigma_{n}\|^{2}_{2}+o_{n}(1),
\end{align}
\begin{align}\label{int7}
\|u_{n}\|^{p+1}_{P+1}=\|u\|^{p+1}_{P+1}+\|w_{n}\|^{P+1}_{P+1}+o_{n}(1)
\end{align}
and
\begin{align}\label{int77}
\|v_{n}\|^{2^{\ast}}_{2^{\ast}}=\|v\|^{2^{\ast}}_{2^{\ast}}+\|\sigma_{n}\|^{2^{\ast}}_{2^{\ast}}+o_{n}(1).
\end{align}
Since
\begin{align}\label{int8}
 \|w_{n}\|^{2}_{D_{r}^{s}(\mathbb{R}^{N})}=\|u_{n}\|^{2}_{D_{r}^{s}(\mathbb{R}^{N})}-\|u\|^{2}_{D_{r}^{s}(\mathbb{R}^{N})}+o_{n}(1),\\\nonumber \|\sigma_{n}\|^{2}_{D_{r}^{s}(\mathbb{R}^{N})}=\|v_{n}\|^{2}_{D_{r}^{s}(\mathbb{R}^{N})}-\|v\|^{2}_{D_{r}^{s}(\mathbb{R}^{N})}+o_{n}(1)
 \end{align}
 and
 \begin{align}\label{int88}
 \int_{\mathbb{R}^{N}}w_{n}\sigma_{n}dx=\int_{\mathbb{R}^{N}}u_{n}v_{n}dx-\int_{\mathbb{R}^{N}}uvdx+o(1).
 \end{align}
Combining \eqref{ab}-\eqref{int8} with \eqref{int88} , we obtain identity \eqref{o}.

Since, for any $(\phi,\varphi)\in \mathcal{H}_{r}$, we have
\begin{align}\label{zzzz}
o(\|(\phi,\varphi)\|)&=<E_{\mu,\nu,\lambda}'(u_{n},v_{n}),(\phi,\varphi)>\\\nonumber
&=<E_{\mu,\nu,\lambda}'(w_{n},\sigma_{n}),(\phi,\varphi)>+<E_{\mu,\nu,\lambda}'(u,v),(\phi,\varphi)>\\\nonumber
&-\int_{\mathbb{R}^{N}}\left[|v_{n}|^{2^{\ast}-2}v_{n}-|\sigma_{n}|^{2^{\ast}-2}\sigma_{n}-|v|^{2^{\ast}-2}v\right]\varphi dx\\\nonumber
&-\int_{\mathbb{R}^{N}}\left[|u_{n}|^{p-1}u_{n}-|w_{n}|^{p-1}w_{n}-|u|^{p-1}u\right]\phi dx.
\end{align}
By Vatali's Theorem \cite{WRC}, we have
\begin{align}\label{zz}
 &\int_{\mathbb{R}^{N}}\left[|v_{n}|^{2^{\ast}-2}v_{n}-|\sigma_{n}|^{2^{\ast}-2}\sigma_{n}-|v|^{2^{\ast}-2}v\right]\varphi dx\\\nonumber
 &=\int_{\mathbb{R}^{N}}\int^{1}_{0}\frac{d\left[[tv_{n}+(1-t)\sigma_{n}]^{2^{\ast}-1}-(tv)^{2^{\ast}-1}\right]}{dt}\varphi dx\\\nonumber
 &=\int_{\mathbb{R}^{N}}\int^{1}_{0}(2^{\ast}-1)[[tv_{n}+(1-t)\sigma_{n}]^{2^{\ast}-2}-(tv)^{2^{\ast}-2}]v \varphi dtdx\rightarrow 0\ \text{as}\ n\rightarrow+\infty.
 \end{align}
 Similarly, we have
 \begin{equation}\label{zzz}
 \int_{\mathbb{R}^{N}}\left[|u_{n}|^{p-1}u_{n}-|w_{n}|^{p-1}w_{n}-|v|^{p-1}u\right]\phi dx\rightarrow 0\ \text{as}\ n\rightarrow+\infty.
 \end{equation}
 By \eqref{zzzz}, \eqref{zz}, \eqref{zzz}, we obtain identity \eqref{oo}.

 Thus, by \eqref{aaa},\eqref{o}, \eqref{oo} and Lemma \ref{Lem5}, we deduce
 \begin{align*}
 o(1)+\widehat{A}_{\mu,\nu,\lambda}&=E_{\mu,\nu,\lambda}(u_{n},v_{n})=E_{\mu,\nu,\lambda}(w_{n},\sigma_{n})+E_{\mu,\nu,\lambda}(u,v)+o(1)\\
 &\geq \widehat{A}_{\mu,\nu,\lambda}+E_{\mu,\nu,\lambda}(w_{n},\sigma_{n})+o(1)
 \end{align*}
 which gives that
 \begin{equation}\label{cc}
 E_{\mu,\nu,\lambda}(w_{n},\sigma_{n})\leq o(1).
 \end{equation}
 On the other hand, by \eqref{oo}, we have
 \begin{equation}\label{ccc}
 ||(w_{n},\sigma_{n})||^{2}_{\mathcal{D}_{r}}+\int_{\mathbb{R}^{N}}(\mu w_{n}^{2}+\nu \sigma_{n}^{2})dx-
\int_{\mathbb{R}^{N}}|\sigma_{n}|^{2^{\ast}}dx+2\lambda \int_{\mathbb{R}^{N}}w_{n}\sigma_{n} dx=o(1).
\end{equation}
Since $\lambda<\sqrt{\mu\nu }$, we can take a small $\tau>0$ such that $\lambda^2=(\mu-\tau)(\nu-\tau)$, by Sobolev imbedding, \eqref{cc} and \eqref{ccc}, we find
\begin{align*}
o(1)&\geq\frac{s}{N}\left[\|(w_{n},\sigma_{n})\|^{2}_{\mathcal{D}_{r}}+\int_{\mathbb{R}^{N}}(\mu w^{2}_{n}+\nu \sigma^{2}_{n} )dx-2\lambda\int_{\mathbb{R}^{N}}w_{n}\sigma_{n}dx\right]\\
&\geq\frac{s}{N}\left[||(w_{n},\sigma_{n})||^{2}_{\mathcal{D}_{r}}+\tau\int_{\mathbb{R}^N}(w_{n}^2+\sigma_{n}^2)\right]\\
&\geq\frac{s}{N}\min(1,\tau)||(w_{n},\sigma_{n})||^{2}_{\mathcal{H}_{r}},
\end{align*}
which implies that $$(u_{n},v_{n})\rightarrow(u,v)\ \text{strongly in }\ \mathcal{H}_{r}.$$
As a result, $(u,v)$ is a critical point of $E_{\mu,\nu,\lambda}$ and satisfies $$E_{\mu,\nu,\lambda}(u,v)=\widehat{A}_{\mu,\nu,\lambda}\ \text{and}\ \ E'_{\mu,\nu,\lambda}(u,v)=0.$$
By Lemma \ref{Lem5}, we obtain $$\widehat{A}_{\mu,\nu,\lambda}= \overline{A}_{\mu,\nu,\lambda}=A_{\mu,\nu,\lambda}.$$
 If $(u,v)\equiv (0,0)$, by \eqref{ccc} and
\begin{align*}
E_{\mu,\nu,\lambda}(u_{n},v_{n})=&\frac{1}{2}\|(w_{n},\sigma_{n})\|^{2}_{\mathcal{D}_{r}}+\frac{1}{2}\int_{\mathbb{R}^{N}}(\mu w^{2}_{n}+\nu \sigma^{2}_{n} )dx\\\nonumber
&-\lambda\int_{\mathbb{R}^{N}}w_{n}\sigma_{n}dx-\frac{1}{2^{\ast}}\int_{\mathbb{R}^{N}}|\sigma_{n}|^{2^{\ast}}dx+o_{n}(1).
\end{align*}
We have
\begin{align}\label{int11}
E_{\mu,\nu,\lambda}(u_{n},v_{n})=&\frac{s}{N}\|(w_{n},\sigma_{n})\|^{2}_{\mathcal{D}_{r}}\\\nonumber
&+\frac{s}{N}\left[\int_{\mathbb{R}^{N}}(\mu w^{2}_{n}+\nu \sigma^{2}_{n} )dx-2\lambda\int_{\mathbb{R}^{N}}w_{n}\sigma_{n}dx\right]+o_{n}(1)\\\nonumber
&\geq \frac{s}{N}\|(w_{n},\sigma_{n})\|^{2}_{\mathcal{D}_{r}}+o_{n}(1).
\end{align}
 Assuming by contradiction, we can assume without loss of generality that $$\lim_{n\rightarrow+\infty}\|(w_{n},\sigma_{n})\|^{2}_{\mathcal{D}_{r}}=l>0.$$
By \eqref{ccc} and Cauchy-Schwarz inequality, we have
\begin{align}\label{abc}
\|(w_{n},\sigma_{n})\|^{2}_{\mathcal{D}_{r}}\leq \int_{\mathbb{R}^{N}}|\sigma_{n}|^{2^{\ast}}dx+o_{n}(1).
\end{align}
 By Sobolev imbedding $D_{r}^{s}(\mathbb{R}^{N})\hookrightarrow L^{2^{\ast}}(\mathbb{R}^{N})$, we have
 \begin{align}\label{abcd}
 \|\sigma_{n}\|^{2}_{D_{r}^{s}(\mathbb{R}^{N})}\geq S_{s}\left(\int_{\mathbb{R}^{N}}|\sigma_{n}|^{2^{\ast}}dx\right)^{\frac{2}{2^{\ast}}}.
 \end{align}
 Combining \eqref{abc} with \eqref{abcd}, we can deduce that
 $$l\geq S^{\frac{N}{2s}}_{s}.$$
 Let $n\rightarrow+\infty$ in \eqref{int11}, we obtain
 $$A_{\mu,\nu,\lambda}\geq\frac{s}{N}S^{\frac{N}{2s}}_{s}.$$
 This contradict with Lemma \ref{Lem7}.

Consequently, $E_{\mu,\nu,\lambda}(u,v)=\widehat{A}_{\mu,\nu,\lambda}=A_{\mu,\nu,\lambda}$ and $E'_{\mu,\nu,\lambda}(u,v)=0$. That is $(u,v)$ is a nontrivial solution of system \eqref{int1}.

{\bf Step 2}. We claim that there exist a positive ground state solution.

Since
 \begin{align*}
 &\int_{\mathbb{R}^{N}}\int_{\mathbb{R}^{N}}\frac{|u(x)-u(y)|^{2}}{|x-y|^{N+2s}}dxdy-\int_{\mathbb{R}^{N}}\int_{\mathbb{R}^{N}}\frac{||u(x)|-|u(y)||^{2}}{|x-y|^{N+2s}}dxdy\\
 &=2\int_{\mathbb{R}^{N}}\int_{\mathbb{R}^{N}}\frac{||u(x)||u(y)|-u(x)u(y)|^{2}}{|x-y|^{N+2s}}dxdy\geq0,
 \end{align*}
 hence, $$\||u|\|_{D_{r}^{s}(\mathbb{R}^{N})}\leq\|u\|_{D_{r}^{s}(\mathbb{R}^{N})}.$$
 Then, for the minimizing sequence $(u_{n},v_{n})\in\mathbb{M}_{r}$, we have
 \begin{align*}
 &\||u_{n}|\|^{2}_{D_{r}^{s}(\mathbb{R}^{N})}+\||v_{n}|\|^{2}_{D_{r}^{s}(\mathbb{R}^{N})}+\int_{\mathbb{R}^{N}}(\mu|u_{n}|^{2}+\nu|v_{n}|^{2})dx\\
 &\leq \|u_{n}\|^{2}_{D_{r}^{s}(\mathbb{R}^{N})}+\|v_{n}\|^{2}_{D_{r}^{s}(\mathbb{R}^{N})}+\int_{\mathbb{R}^{N}}(\mu u^{2}_{n}+\nu v^{2}_{n})dx\\
 &=\int_{\mathbb{R}^{N}}(|u_{n}|^{p+1}+|v_{n}|^{2^{\ast}})dx+\int_{\mathbb{R}^{N}}2\lambda |u_{n}||v_{n}|dx,
 \end{align*}
 this implies that there exists $t_{n}\in (0,1]$ such that $(t_{n}|u_{n}|,t_{n}|v_{n}|)\in\mathbb{M}_{r}.$ Hence, we can choose a minimizing sequence $(\overline{u}_{n},\overline{v}_{n})=(t_{n}|u_{n}|,t_{n}|v_{n}|)$ and the weak limit $(\overline{u},\overline{v})$ is nonnegative.
By Strong maximum principle for fractional Laplacian( see,
Proposition 2.17 in \cite{LS}), we have $\overline{u}$ and $\overline{v}$ are both positive.

Next, we claim if $\mu>\mu_{0}$ and $0<\lambda<\lambda_{\mu,\nu}$, then system \eqref{int1} has no ground state solution.

Assume by contradiction that there exist $\lambda\in(0,\lambda_{\mu,\nu})$ such that system \eqref{int1} has a ground state solution $(u_{\lambda},v_{\lambda})\neq (0,0)$. Then $E_{\mu,\nu,\lambda}(u_{\lambda},v_{\lambda})=A_{\mu,\nu,\lambda}=\frac{s}{N}S^{\frac{N}{2s}}_{s}.$ By \eqref{int57} and \eqref{int58} we may assume that $u_{\lambda}\geq0,v_{\lambda}\geq0,$ by Strong maximum principle for fractional Laplacian( see Proposition 2.17 in \cite{LS}), we have $u_{\lambda}>0,v_{\lambda}>0.$ If we take $\lambda_{1}\in(\lambda,\lambda_{\mu,\nu})$, then by Lemma \ref{Lem7} and \eqref{Lem2}, we have
\begin{align*}
\frac{s}{N}S^{\frac{N}{2s}}_{s}&=A_{\mu,\nu,\lambda_{1}}\leq \max \limits_{t>0}E_{\mu,\nu,\lambda_{1}}(tu_{\lambda},tv_{\lambda})\\
&=E_{\mu,\nu,\lambda_{1}}(t_{\lambda_{1},u_{\lambda},v_{\lambda}}u_{\lambda},t_{\lambda_{1},u_{\lambda},v_{\lambda}}v_{\lambda})\\
&=E_{\mu,\nu,\lambda}(t_{\lambda_{1},u_{\lambda},v_{\lambda}}u_{\lambda},t_{\lambda_{1},u_{\lambda},v_{\lambda}}v_{\lambda})-(\lambda_{1}-\lambda)t^{2}_{\lambda_{1},u_{\lambda},v_{\lambda}}\int_{\mathbb{R}^{N}}u_{\lambda}v_{\lambda}dx\\
&<E_{\mu,\nu,\lambda}(t_{\lambda_{1},u_{\lambda},v_{\lambda}}u_{\lambda},t_{\lambda_{1},u_{\lambda},v_{\lambda}}v_{\lambda})\leq E_{\mu,\nu,\lambda}(u_{\lambda,v_{\lambda}})=\frac{s}{N}S^{\frac{N}{2s}}_{s},
\end{align*}
a contradiction. This completes the proof.
\end{proof}

E-mails: mdzhen@hust.edu.cn; taoismnature@hust.edu.cn; hyxu@hust.edu.cn; yangmeih@hust.edu.cn.

\end{document}